\title[Invariance properties of MMM characteristic numbers]{Invariance properties of Miller--Morita--Mumford characteristic numbers of fibre
  bundles}
\author{Thomas Church}
\author{Martin Crossley}
\author{Jeffrey Giansiracusa} 
\date{15 December, 2011}
\DeclareSymbolFont{cmlargesymbols}{OMX}{cmex}{m}{n}
\DeclareMathSymbol{\mycoprod}{\mathop}{cmlargesymbols}{"60}
\let\coprod\mycoprod
\numberwithin{equation}{subsection}
\newtheorem{thmA}{Theorem}
\newtheorem{corA}[thmA]{Corollary}
\newtheorem{theorem}{Theorem}[subsection]  
\newtheorem{lemma}[theorem]{Lemma} 
\newtheorem{proposition}[theorem]{Proposition}
\newtheorem{definition}[theorem]{Definition}
\theoremstyle{remark} 
\newtheorem{question}[theorem]{Question}
\newcommand{\Hom}{\mathrm{Hom}}
\newcommand{\Diff}{\mathrm{Diff}}
\newcommand{\C}{\mathbb{C}}
\newcommand{\R}{\mathbb{R}}
\newcommand{\N}{\mathbb{N}}
\newcommand{\Z}{\mathbb{Z}}
\newcommand{\Q}{\mathbb{Q}}
\newcommand{\Smash}{\wedge}
\newcommand{\alg}{\mathscr{C}}
\DeclareMathOperator*{\colim}{colim} 
\newcommand{\MSO}{\mathbf{MSO}}
\newcommand{\MU}{\mathbf{MU}}
\newcommand{\MTSO}{\mathbf{MTSO}}
\newcommand{\MTU}{\mathbf{MTU}}
\newcommand{\ThS}{\mathbf{Th}}
\newcommand{\Th}{\mathrm{Th}}
\DeclareMathOperator{\hocolim}{\mathrm{hocolim}}
\newcommand{\defeq}{\mathbin{\mathpalette{\vcenter{\hbox{$:$}}}=}}
\def\co{\colon\thinspace} 
\begin{document}
\begin{abstract}
  Characteristic classes of fibre bundles $E^{d+n}\to B^n$ in the
  category of closed oriented manifolds give rise to characteristic
  numbers by integrating the classes over the base.  Church, Farb and
  Thibault \cite{CFT} raised the question of which generalised
  Miller--Morita--Mumford classes have the property that the associated
  characteristic number is independent of the fibering and depends
  only on the cobordism class of the total space $E$.  Here we
  determine a complete answer to this question in both the oriented
  category and the stably almost complex category.  An MMM class has
  this property if and only if it is a fibre integral of a vector
  bundle characteristic class that satisfies a certain approximate
  version of the additivity of the Chern character.
\end{abstract}
\maketitle

\section{Introduction}
\subsection{Background}
Let $\pi\co E^{n+d} \to B^n$ be a fibre bundle with closed oriented
smooth $d$-dimensional manifold fibres. The generalised
Miller--Morita--Mumford (MMM) characteristic classes for such bundles
are a family of characteristic classes living in the cohomology of the
base.  They were first introduced in \cite{Miller}, \cite{Morita}, and
\cite{Mumford}, and there is an extensive literature on these classes
when $d=2$, including the proof of Mumford's Conjecture and work on
Faber's Conjectures.  In higher dimensions these classes are somewhat
less studied, though there are some interesting results, such as those
of \cite{Ebert-vanishing}, \cite{Ebert-alg-indep}.

In \cite{CFT} the first author proved with Farb and Thibault that, in the
$d=2$ setting, certain characteristic numbers associated with the MMM
classes could be expressed purely in terms of the oriented bordism
class of the total space $E$.  The purpose of this paper is to explore
this phenomenon more thoroughly from the point of view of stable
homotopy and bordism theory.

The generalised MMM classes are defined as follows.  A
characteristic class $X \in H^*(BSO(d);\Z)$ for rank $d$ vector
bundles (rationally $X$ can be written as a polynomial in the Euler
class and Pontrjagin classes $p_1, \ldots, p_{\lfloor d/2 \rfloor}$)
can be evaluated on the fibrewise tangent bundle $T^vE \to E$ to
produce a cohomology class $X(T^vE) \in H^*(E;\Z)$. The image of this
class under the pushforward map $\pi_!\co H^*(E;\Z) \to H^{*-d}(B;\Z)$
is a class
\[
\widehat{X}(E\stackrel{\pi}{\to} B) \defeq \pi_! X(T^vE) \in
H^*(B;\Z)
\] 
(note that $\widehat{X}$ is zero if $\mathrm{deg}(X) < d$) that is
natural with respect to pullbacks of bundles.  Products and sums of
these classes are also natural with respect to pullbacks.  The
polynomials in the classes $\widehat{X}$ produced by this construction
are, by definition, the \emph{generalised MMM classes} for
$d$-dimensional oriented fibre bundles.  These classes form an algebra
$\alg_d$ (over $\Z$).  More precisely, $\alg_d$ is the free graded
commutative algebra generated by $H^{>d}(BSO(d);\Z)$ (with degrees
shifted down by $d$ so that a degree $d+i$ class gives a generator of
degree $i$).  We promote it to a bialgebra by declaring the generators
to be primitive.

If the base $B$ is a closed oriented manifold of dimension $n$ and $x
\in \alg_d$ is of degree $n$ then we define an associated \emph{characteristic
  number} $x^\sharp$ by pairing with the fundamental class of the base,
\[
x^\sharp(E\to B) \defeq \langle x(E\to B), [B]\rangle \in \Z.
\]
The characteristic class $x(E\to B)$ is an invariant living in a group
that depends on the base $B$, and so the characteristic classes of
bundles over different bases cannot be directly compared. However, the
associated characteristic numbers, being simply integers, do allow for
immediate comparison.

At this point it is useful to observe that the characteristic number
$x^\sharp(E\to B)$ depends only on the class of $x$ in $\alg_d \otimes
\Q$; the 2-torsion classes in $H^*(BSO(d);\Z)$ (there is no other
torsion) yield MMM classes that necessarily give zero when integrated
over the base.  Thus, when studying MMM characteristic numbers, one
loses no information by working rationally, and we shall do this
from here onwards.

It sometimes happens that a given manifold $E^{d+n}$ admits multiple
distinct fiberings $E\to B$ over bases of a fixed dimension.  In
\cite{CFT} the following question is raised.  \textit{To what extent
  do the MMM characteristic numbers actually depend on the fibering?}
The main result of that paper is that, in the case of surface bundles,
if $x$ is the MMM class $e_{2n-1} \defeq \pi_!  (e(T^vE)^{2n}) \in
H^{2n-2}(B)$ determined by an even power of the Euler class then
$x^\sharp$ can be written as a polynomial in the Pontrjagin numbers of
$E$ and thus depends only on the oriented topological cobordism class
of $E$ and is completely independent of the fibering.

In light of the question and results of \cite{CFT}, it then is natural
to try to decide which of the generalised MMM classes have associated
characteristic numbers that can be expressed purely as functions of
the bordism class of the total space.  To state this problem more
precisely we introduce some notation.  Let $Bun_d$ denote the set of
isomorphism classes of bundles with $d$-dimensional fibres in the
category of smooth closed oriented manifolds; this is a commutative
monoid with respect to disjoint union.  Sending a bundle to the
bordism class of its total space defines an additive map $Tot\co Bun_d
\to MSO_{*}(pt)$.  A class $x\in \alg_d\otimes \Q$ determines an
additive map $x^\sharp\co Bun_d \to \Q$. In this paper we give a
complete answer to the following question.
\begin{question}\label{main-Q}
  For which classes $x\in \alg_d\otimes \Q$ does there exist a
  factorisation as below?
\[
\xymatrix{
& MSO_{*}(pt) \ar@{.>}[d] \\
Bun_d \ar[ur]^{Tot} \ar[r]_{x^\sharp} & \Q}
\]
\end{question}

\subsection{The motivating example}\label{prototype}
Let us first observe that there is an obvious family of elements in
$\alg_d$ for which the associated characteristic numbers depend only
on the total space of the bundle.  Recall that $H^*(BSO;\Q)$ is a Hopf
algebra with coproduct $\Delta$ induced by the direct sum map.
Suppose $X \in H^*(BSO;\Q)$ is primitive (meaning that $\Delta(X) = X
\otimes 1 + 1 \otimes X$) and consider the corresponding generalised
MMM class $\widehat{X} \in \alg_d\otimes \Q$ and its associated
characteristic number $\widehat{X}^\sharp$.  Given a bundle $\pi\co
E^{n+d} \to B^n$, there is a short exact sequence $T^vE \to TE \to
\pi^*TB$  and hence $X(TE) = X(T^vE) + \pi^*X(TB)$ since $X$ is
primitive. We then have,
\begin{align*}
\langle X(TE), [E] \rangle 
  & = \langle \pi_! X(TE), [B]\rangle \\
  & = \langle \pi_!X(T^vE), [B] \rangle + \langle \pi_! \pi^*X(TB), [B]\rangle \\
  & = \widehat{X}^\sharp(E\to B)
\end{align*}
since $\pi_! \circ \pi^*$ is the zero map.  Thus
$\widehat{X}^\sharp(E\to B)$ can be expressed as a polynomial in the
tangential Pontrjagin numbers of $E$ which depend only on the class
of $E$ in oriented bordism tensored with $\Q$.

The primitives in $H^*(BSO;\Q)$ are easily described.  The Pontrjagin
character $Ph \in H^*(BSO;\Q)$ is the pullback of the Chern character
$Ch$ via the complexification map $BSO \to BU$.  The Chern character
is additive: $Ch(V\oplus W) = Ch(V) + Ch(W)$, and so the Pontrjagin
character is also additive; i.e., the components are primitive.  The
space of primitives in $H^*(BU;\Q)$ and $H^*(BSO;\Q)$ is spanned by
the components of the Chern character and Pontrjagin character,
respectively, and the primitives in fact freely generate the
cohomology as a ring since the cohomology is a commutative and
cocommutative connected Hopf algebra.

When $d=2$,
\[
\alg_2 \cong \Z[e_1, e_2, \ldots],
\]
where $e_i = \widehat{e^{i+1}}$. The restriction of the Pontrjagin
character to $BSO(2)$ is zero in degrees $4i+2$ and is proportional to
$e^{2i} = p_1^i$ in degree $4i$. Thus the $e_{odd}$ characteristic
numbers are invariants of the total space, recovering the main theorem
of \cite{CFT}.  It turns out, as we shall see in Theorem \ref{dim2-thm}
below, that these are the only characteristic numbers having this
property.

\subsection{The method}
The main idea of this paper is to rephrase Question \ref{main-Q} in
terms of stable homotopy and certain bordism spectra.  We first show
that the maps $x^\sharp\co Bun_d \to \Q$ and $Tot\co Bun_d \to
MSO_*(pt)$ both admit canonical factorisations through an additive map
\[
\alpha\co Bun_d \to MSO_*(\Omega^\infty \MTSO(d)),
\]
where $\MTSO(d)$ is the Madsen--Tillmann spectrum (defined in Section
\ref{section:thom-spectra} below); we describe all of these maps at
the level of spectra.  In the diagram,
\begin{equation}\label{factorisation-diagram}
\xymatrix{
 & & MSO_{*+d}(pt) \ar@{.>}[dd]\\
Bun_d \ar@/_1pc/[drr]_{x^\sharp} \ar@/^1pc/[urr]^{Tot}
\ar[r]^-{\alpha} & MSO_*(\Omega^\infty \MTSO(d)) \ar[ru]_{\widetilde{Tot}}
\ar[dr]^{x^\sharp} \\
& & \Q,}
\end{equation}
a straightforward rational calculation in stable homotopy then
determines exactly when the dotted arrow can be completed to make the
inner triangle commute.  Given this, we show that the results of Ebert
\cite{Ebert-alg-indep} determine enough information about the image of
$\alpha$ to decide exactly when the dotted arrow can be filled to make
the outer triangle commute.

\subsection{Near-primitive characteristic classes of vector bundles}
In carrying out the rational calculation in Section
\ref{section:calculation} we find that the argument given above in
Section \ref{prototype} with the Pontrjagin character (when
reformulated in terms of stable homotopy) actually yields the
factorisation asked for in Question \ref{main-Q} for a class of
elements in $H^*(BSO;\Q)$ that is somewhat larger than the space of
primitives; we call these new elements \emph{near-primitives}.

\begin{definition}
  Let $H$ be a graded connected cocommutative Hopf algebra.  A
  \emph{near-primitive of order $d$} in $H$ is an element $x$ of
  degree at least $d$ such that the composition
\[
H \stackrel{\Delta}{\to} H\otimes H \stackrel{id\otimes \mathrm{proj}
}{\longrightarrow} H\otimes H/H^{< d}
\]
sends $x$ to $1\otimes x$. 
\end{definition} 
Note that if $H$ is concentrated in nonnegative degrees then a
near-primitive of order $1$ is simply a primitive.  In Section
\ref{classify-near-primitives} we give a complete classification of
near primitives in $H^*(BSO;\Q)$ and $H^*(BU;\Q)$.
\begin{thmA}\label{near-prim-thm}
  The space of order $d$ near-primitives in $H^*(BSO;\Q)$ or
  $H^*(BU;\Q)$ has a basis consisting of those monomials in the
  primitives such that all proper factors have degree strictly less
  than $d$.  Explicitly, the degree $m$ component of the space is
  equal to the degree $m$ part of $\Q[Q_i \:\:|\:\: m-d < |Q_i| < d
  ]\oplus \Q Q_m$, where the $Q_i$ are the components of the
  Pontrjagin character or Chern character respectively.
\end{thmA}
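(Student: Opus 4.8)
The plan is to use the structural fact recalled above that $H := H^*(BSO;\Q)$ (resp. $H^*(BU;\Q)$) is the polynomial algebra $\Q[Q_1,Q_2,\ldots]$ on the components $Q_i$ of the Pontrjagin (resp. Chern) character, each $Q_i$ being primitive and of degree $|Q_i| = 4i$ (resp. $2i$). The first step is to make the coproduct explicit on monomials. Since $Q_i\otimes 1$ and $1\otimes Q_i$ commute in $H\otimes H$, the binomial theorem gives, for a multi-index $\lambda$ with $Q^\lambda := \prod_i Q_i^{\lambda_i}$,
\[
  \Delta(Q^\lambda) \;=\; \prod_i (Q_i\otimes 1 + 1\otimes Q_i)^{\lambda_i} \;=\; \sum_{0\le\mu\le\lambda}\binom{\lambda}{\mu}\,Q^\mu\otimes Q^{\lambda-\mu},
\]
where $\binom{\lambda}{\mu} := \prod_i\binom{\lambda_i}{\mu_i}$, a positive integer whenever $\mu\le\lambda$ componentwise.

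The second step is to unwind the near-primitivity condition. Writing a homogeneous degree-$m$ element as $x = \sum_{|Q^\lambda| = m} c_\lambda Q^\lambda$ and applying $\id\otimes\mathrm{proj}$ to the formula above, the $\mu = 0$ terms already contribute $1\otimes x$ (the image of $x$), so the equation $(\id\otimes\mathrm{proj})\Delta(x) = 1\otimes x$ reduces to the requirement that, for every multi-index $\mu$ with $0 < \mu\le\lambda$ for some $\lambda$ (componentwise, $\mu\ne 0$), the element $\sum_{\lambda\ge\mu} c_\lambda\binom{\lambda}{\mu}\,Q^{\lambda-\mu}$ of $H$ lie in $H^{<d}$; here one uses that the $Q^\mu$ form a basis of $H$ to separate the contributions in the first tensor factor. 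Here is the one point requiring a small argument: if $m - |Q^\mu| < d$ this holds vacuously, while if $m - |Q^\mu|\ge d$ then $H^{<d}$ contains no nonzero element of that degree, and the monomials $Q^{\lambda-\mu}$ occurring are pairwise distinct (as $\lambda\mapsto\lambda-\mu$ is injective), so the relation forces $c_\lambda\binom{\lambda}{\mu} = 0$, hence $c_\lambda = 0$, for every $\lambda\ge\mu$. Thus there is no cancellation between monomials: $x$ is an order-$d$ near-primitive if and only if every $Q^\lambda$ with $c_\lambda\ne 0$ is, and a monomial $Q^\lambda$ of degree $m$ is a near-primitive precisely when $|Q^\mu| > m - d$ for all $\mu$ with $0 < \mu\le\lambda$ — equivalently, taking $\mu$ to be a single variable, when every $Q_i$ dividing $Q^\lambda$ satisfies $|Q_i| > m - d$.

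The final step is to identify this monomial criterion with the stated basis. A one-factor monomial is the degree-$m$ primitive $Q_m$, a near-primitive exactly when $m\ge d$; this accounts for the summand $\Q Q_m$. For a monomial $Q^\lambda$ of degree $m$ with at least two factors, the condition ``every variable has degree $> m-d$'' automatically implies ``every variable has degree $< d$'': fixing one factor of degree $a$, the remaining factors have total degree $m - a$, there is at least one of them, of degree $> m - d\ge 0$, so $m - a > m - d$ and hence $a < d$. Therefore such monomials are precisely the degree-$m$ monomials of $\Q[Q_i\mid m-d < |Q_i| < d]$ with at least two factors, and conversely every one of these is a near-primitive; since $\Q[Q_i\mid m-d < |Q_i| < d]$ contains no one-factor monomial in degree $m\ge d$, the near-primitives of degree $m$ are exactly the degree-$m$ part of $\Q[Q_i\mid m-d < |Q_i| < d]\oplus\Q Q_m$. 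Checking that ``all variables lie in the window $(m-d,d)$'' coincides with ``all proper sub-products have degree $< d$'' is then routine partition bookkeeping and yields the monomial-basis description. The only substantive step is the no-cancellation observation in the second paragraph; everything else is combinatorial housekeeping.
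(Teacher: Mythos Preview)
Your proof is correct and follows essentially the same approach as the paper: both compute the coproduct on monomials via the binomial expansion $\Delta(Q^\lambda)=\sum_{\mu\le\lambda}\binom{\lambda}{\mu}Q^\mu\otimes Q^{\lambda-\mu}$, then use the linear independence of the $Q^\mu$ in the first tensor factor (and of the $Q^{\lambda-\mu}$ in the second) to conclude that there is no cancellation, so near-primitivity is detected monomial-by-monomial. Your write-up is somewhat more explicit than the paper's in deriving the window condition $m-d<|Q_i|<d$ and in separating out the $\Q Q_m$ summand, but the substance is the same.
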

A primitive is a near-primitive (of any order); conversely, it follows
from the theorem that the \emph{only} near-primitives of order $d$ in
degrees $\geq 2d$ are the ordinary primitives.  In particular, for
$BSO$, when $d=2$ all near-primitives are primitive and their
restrictions to $BSO(2)$ are proportional to the odd powers of the
Euler class.  However, in the range $d\leq*<2d$ the space of
near-primitives can be significantly larger than the subspace of
primitives.  The simplest example of near-primitives that are not
primitive occurs when $5\leq d \leq 8$; in this case all elements of
degree 8 (all elements in the span of $p_1^2$ and $p_2$) are
near-primitive of order $d$, while only the scalar multiples of $p_2 -
(1/2)p_1^2$ are primitive.

\subsection{Results}
The result of our rational calculation is the following determination
of when the inner triangle in diagram \eqref{factorisation-diagram}
can be completed.
\begin{thmA}\label{submain-thm}
  Given a class $x \in \alg_d$, the map $x^\sharp\co
  MSO_*(\Omega^\infty \MTSO(d)) \to \Q$ factors through $\widetilde{Tot}\co
  MSO_*(\Omega^\infty \MTSO(d)) \to MSO_*(pt)$ if and only if $x$
  satisfies the following conditions:
\begin{enumerate}
\item $x$ is primitive, so it is of the form
  $\widehat{X}$ for some $X \in H^*(BSO(d);\Q)$, and
\item $X$ is the restriction of a near-primitive of order $d$ in $H^*(BSO;\Q)$.
\end{enumerate}
\end{thmA}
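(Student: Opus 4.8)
The plan is to compute both $x^\sharp$ and $\widetilde{Tot}$ explicitly at the level of spectra, reducing the factorisation question to a statement in the rational (co)homology of $\MTSO(d)$. First I would recall that $\Omega^\infty\MTSO(d)$ carries a universal bundle whose fibrewise tangent bundle is classified by a map $\MTSO(d)\to \Sigma^{-d}\mathbf{MSO}\wedge BSO(d)_+$ (or the analogous parametrised-Thom-spectrum statement), so that the MMM class $\widehat{X}$ corresponds to a specific cohomology class $u_X\in H^{*}(\Omega^\infty\MTSO(d);\Q)$, and $x^\sharp$ is given by pairing $u_X$ against the image of a bordism class in $MSO_*(\Omega^\infty\MTSO(d))$. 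Likewise $\widetilde{Tot}$ is induced by the Thom collapse / inclusion of the bottom cell, i.e.\ by the map of spectra $\MTSO(d)\to \Sigma^{-d}\mathbf{MSO}$ coming from the Becker--Gottlieb transfer, and on homotopy/homology it records the total space. Rationally, $MSO_*(pt)\otimes\Q = \Q[x_{4i}]$ and $H_*(\Omega^\infty\MTSO(d);\Q)$ is a free graded-commutative algebra on $H_{*>0}(\MTSO(d);\Q)$, which by the Thom isomorphism is $H_{*+d}(BSO(d);\Q)$ truncated to positive degrees; I would set up dual bases so that the three maps in the inner triangle become explicit algebra maps.

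Next I would carry out the linear-algebra core of the argument. The map $\widetilde{Tot}_*$ on rational homology is, up to the Thom isomorphism, the fibre-integration / transfer map $H_*(BSO(d))\to H_{*-d}(BSO)$ composed with the natural splitting off $H_*(pt)$; dually, $\widetilde{Tot}^*$ on cohomology sends a characteristic class $Y\in H^{*-d}(BSO;\Q)$ to the MMM-type class $\widehat{Y}$, and one checks that the primitive MMM classes $\widehat{X}$ with $X$ near-primitive of order $d$ are exactly those in the image. Concretely: a linear functional on $MSO_*(\Omega^\infty\MTSO(d);\Q)$ of the form $x^\sharp$ factors through $\widetilde{Tot}$ iff it kills the kernel of $\widetilde{Tot}_*$; writing everything in the free-algebra presentation, the kernel is generated by decomposables whose factors exceed the truncation degree, so $x^\sharp$ must vanish on all products of positive-degree elements — this forces $x$ to be \emph{primitive} (condition (1)), hence $x=\widehat{X}$. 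Given primitivity, the remaining freedom is the choice of $X\in H^{*}(BSO(d);\Q)$, and the condition that $\widehat{X}$ lie in the image of $\widetilde{Tot}^*$, i.e.\ that $X$ be the restriction of some $Y\in H^*(BSO;\Q)$ with $\widehat{Y}$ primitive, unwinds via the prototype computation of Section~\ref{prototype} (the short exact sequence $T^vE\to TE\to\pi^*TB$ and $\pi_!\pi^* = 0$) into precisely the near-primitivity-of-order-$d$ condition: the coproduct of $Y$ must, modulo classes of degree $<d$, be concentrated in $1\otimes Y$. Invoking Theorem~\ref{near-prim-thm} then identifies these $X$ explicitly, giving condition (2).

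The step I expect to be the main obstacle is getting the spectrum-level identification of $\widetilde{Tot}$ and of the MMM cohomology class on $\Omega^\infty\MTSO(d)$ exactly right — in particular pinning down the map $\MTSO(d)\to\Sigma^{-d}\mathbf{MSO}$ and verifying that, after applying $MSO_*(-)$ and rationalising, it really does compute $E\mapsto[E]\in MSO_{*+d}(pt)$ with the correct normalisation, and dually that the fibre-integration formula for $\widehat{X}$ matches the definition $\pi_! X(T^vE)$ on the universal example. Once these identifications are in place, the algebra is a finite-dimensional linear computation in each degree: one writes the matrix of $\widetilde{Tot}_*$ in the monomial basis of the free algebra, reads off its image and cokernel, and checks that "$x^\sharp$ annihilates $\ker\widetilde{Tot}_*$'' is equivalent to the two stated conditions. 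The only subtlety there is bookkeeping the degree truncation $H/H^{<d}$ carefully, since that is exactly where near-primitives (rather than honest primitives) enter. I would conclude by remarking that conditions (1) and (2) together with Theorem~\ref{near-prim-thm} make the set of valid $x$ completely explicit, which is the form needed for the subsequent analysis of the outer triangle using Ebert's results.
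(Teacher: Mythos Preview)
Your overall strategy coincides with the paper's: translate the factorisation question to spectra via the diagram
\[
\MSO\wedge \Omega^\infty\MTSO(d)_+ \xrightarrow{id\wedge\sigma} \MSO\wedge\MTSO(d) \to \MSO\wedge\Sigma^{-d}\MSO \to \Sigma^{-d}\MSO,
\]
and then compute rationally. Your derivation of condition~(1) is correct: the image of $\sigma^*$ in positive degrees is exactly the space of primitives, so $th\otimes x$ can only lie in the image of the pullback if $x=\widehat{X}$ is primitive.

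There is, however, one genuine step you are skipping in condition~(2). When you unwind the pullback along the top row, the condition you actually obtain on $Y\in H^*(BSO;\Q)$ is
\[
(1\otimes(i^*\circ\mathrm{proj}))\circ\bar{\Delta}(Y)=0,
\]
because the map $\MTSO(d)\to\Sigma^{-d}\MSO$ factors through $BSO(d)$ and hence involves the restriction $i^*\co H^*(BSO;\Q)\to H^*(BSO(d);\Q)$. This is \emph{a priori} weaker than near-primitivity, which is the condition $(1\otimes\mathrm{proj})\circ\bar{\Delta}(Y)=0$: the kernel of $i^*$ is the ideal generated by $p_{\lfloor d/2\rfloor+1},p_{\lfloor d/2\rfloor+2},\ldots$, so there could in principle be extra $Y$'s satisfying the first condition but not the second. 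The paper closes this gap with Lemma~\ref{restriction-lemma}, whose proof uses Newton's identities to show that the primitive $Q_j$ always contains a $p_1^j$ (or $c_1^j$) term and therefore never lies in $\ker i^*$; combined with Lemma~\ref{primcoprod} this forces the two kernels to agree. Your appeal to the prototype computation of Section~\ref{prototype} does not supply this: that argument shows near-primitivity is \emph{sufficient} (and indeed generalises nicely to near-primitives via the projection formula and $\pi_!$ killing degrees $<d$), but it does not show necessity, which is precisely where the $BSO$ versus $BSO(d)$ discrepancy must be resolved.

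A minor point: the map $\MTSO(d)\to\Sigma^{-d}\MSO$ is not a Becker--Gottlieb transfer; it is simply the Thom-spectrum functoriality map $C_{-\gamma_d}$ associated to classifying $-\gamma_d$ as a stable bundle.
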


We now turn to the outer triangle in diagram
\eqref{factorisation-diagram}.  Let $L \subset H^*(BSO(d);\Q)$ denote
the subspace spanned by the components of the Hirzebruch $L$-class
(restricted to $BSO(d)$) and let $K$ denote the ideal in
$\alg_d\otimes\Q$ generated by the elements of the form
$\widehat{X}$ for $X\in L$.  Let $NP_d \subset H^*(BSO(d);\Q)$
denote the image of the space of near-primitives of order $d$ on $BSO$
under restriction from $BSO$ to $BSO(d)$.  Our main result is the
following:
\begin{thmA}\label{main-thm}
  Given a class $x \in \alg_d\otimes\Q$, the map $x^\sharp\co Bun_d
  \to \Q$ factors through $Tot\co Bun_d \to MSO_*(pt)$ if and only if
  there exists $X\in NP_d$ such that $x = \widehat{X}$ in $\alg_d$
  when $d$ is even and in $\alg_d/K$ when $d$ is odd.
\end{thmA}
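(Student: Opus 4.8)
The plan is to use the factorisation \eqref{factorisation-diagram} to reduce Question~\ref{main-Q} to a statement about the image of $\alpha$, and then to combine Theorem~\ref{submain-thm} with the results of Ebert~\cite{Ebert-alg-indep}. Since $x\mapsto x^\sharp$ is $\Q$-linear and the property ``$x^\sharp$ factors through $Tot$'' is preserved under sums and scalar multiples, the set of $x$ with this property is a linear subspace of $\alg_d\otimes\Q$; thus it suffices to verify sufficiency on a spanning set and necessity in general.

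For sufficiency I would argue in two cases. If $x=\widehat X$ with $X\in NP_d$, then by Theorem~\ref{submain-thm} the inner triangle of \eqref{factorisation-diagram} commutes, i.e.\ $x^\sharp=\phi\circ\widetilde{Tot}$ on $MSO_*(\Omega^\infty\MTSO(d))$ for some $\phi$; precomposing with $\alpha$ and using the two outer commutativities $\widetilde{Tot}\circ\alpha=Tot$ and $x^\sharp\circ\alpha=x^\sharp$ built into the diagram yields $x^\sharp=\phi\circ Tot$ on $Bun_d$. If $d$ is odd and $x$ lies in the ideal $K$, I would instead show that $x^\sharp\equiv 0$ on $Bun_d$, so that $x^\sharp$ factors trivially through $Tot$; because $K$ is generated by the classes $\widehat X$ for $X$ a component of the restricted $L$-class, this reduces to the vanishing $\pi_!\bigl(L_i(T^vE)\bigr)=0$ in $H^*(B;\Q)$ for every bundle with odd-dimensional fibres. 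That vanishing is the statement that fibre-integrated $L$-classes are trivial in odd fibre dimension: one checks it by pulling the bundle back along maps from closed manifolds representing homology classes of the base and invoking the vanishing of the signature of a fibre bundle with odd-dimensional fibre (equivalently, the family signature theorem), and I expect the spectrum-level description of $\widetilde{Tot}$ from Section~\ref{section:thom-spectra} also yields it directly. Together with linearity these two cases cover all of $\widehat{NP_d}+K$ (and $\widehat{NP_d}$ when $d$ is even), which is exactly the set of $x$ described in the statement.

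For necessity, note that whether $x^\sharp$ factors through $Tot$ depends only on the restriction of $\widetilde{Tot}$ and of $x^\sharp$ to the subgroup of $MSO_*(\Omega^\infty\MTSO(d))$ generated by the image of $\alpha$; concretely, the factorisation exists if and only if $x^\sharp$ annihilates $\ker(\widetilde{Tot})$ intersected with that subgroup (after rationalising and group-completing). Here I would invoke Ebert~\cite{Ebert-alg-indep} to pin down this subgroup: his algebraic-independence results show that the image of $\alpha$ is as large as the constraint of the previous paragraph permits, namely its rational span is all of $MSO_*(\Omega^\infty\MTSO(d);\Q)$ when $d$ is even, and when $d$ is odd it is precisely the subspace on which all classes of $K$ pair to zero (using $H^*(\Omega^\infty_0\MTSO(d);\Q)\cong\alg_d\otimes\Q$). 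For $d$ even this makes factoring over the image of $\alpha$ equivalent to factoring over all of $MSO_*(\Omega^\infty\MTSO(d))$, so Theorem~\ref{submain-thm} forces $x\in\widehat{NP_d}$. For $d$ odd the same computation, now carried out in $\alg_d/K$, shows that $x^\sharp$ factors if and only if the image of $x$ in $\alg_d/K$ is $\widehat X$ for some $X\in NP_d$. The main obstacle is this last step: extracting from Ebert's results the precise description of the rational span of the image of $\alpha$ — in particular the assertion that for odd $d$ the relations forced on the MMM characteristic numbers are exactly those in $K$ and no others — so that no class outside $\widehat{NP_d}+K$ slips through.
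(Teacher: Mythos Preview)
Your approach is essentially the paper's: reduce to Theorem~\ref{submain-thm} via diagram~\eqref{factorisation-diagram}, and invoke Ebert's results to control the passage from $MSO_*(\Omega^\infty\MTSO(d))$ back to $Bun_d$. Sufficiency is handled exactly as you describe.

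The one substantive difference is in how necessity is phrased. You assert that the rational span of $\mathrm{im}(\alpha)$ is all of $MSO_*(\Omega^\infty\MTSO(d))\otimes\Q$ (respectively the annihilator of $K$ when $d$ is odd), so that factoring $x^\sharp$ through $Tot$ becomes equivalent to factoring through $\widetilde{Tot}$. The paper instead dualises: it shows only that $\alpha^\vee$ \emph{restricted to the subspace} $\alg_d\otimes\Q\subset\Hom(MSO_*(\Omega^\infty\MTSO(d)),\Q)$ has kernel $0$ or $K$, and then declares that Theorem~\ref{main-thm} ``follows immediately''. Your statement about the full image of $\alpha$ is strictly stronger than this, and it is \emph{not} what Ebert's theorem gives directly---Ebert detects nonzero classes in $\alg_d$ by exhibiting bundles, which yields precisely the restricted injectivity of $\alpha^\vee$, not surjectivity of $\alpha$ onto all of $MSO_*(\Omega^\infty\MTSO(d))$. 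So the obstacle you flag is genuine for your formulation; the paper sidesteps it by working with the weaker dual statement. (One can in fact bootstrap from the restricted statement to yours by observing that $\mathrm{im}(\alpha)$ is an $MSO_*$-submodule, since multiplying a bundle by a closed oriented manifold gives another bundle; but neither you nor the paper spells this out.)

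Your separate appeal to the family signature theorem to establish $x^\sharp\equiv 0$ for $x\in K$ is correct but unnecessary: Ebert's theorem for odd $d$ is an if-and-only-if, characterising $K$ as exactly the kernel, and the paper absorbs both directions into its single Lemma.
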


As a special case of this theorem, when $d=2$ we find that the
sufficient condition given in \cite{CFT} is in fact also necessary.
\begin{corA}\label{dim2-thm}
  Given $x \in \alg_2\otimes\Q \cong \Q[e_1, e_2, \ldots]$, the
  characteristic number $x^\sharp\co Bun_2 \to \Q$ factors through
  $Tot\co Bun_2 \to MSO_*(pt)$ if and only if $x$ is in the span of
  the odd MMM class $\{e_{2i+1}\}$.
\end{corA}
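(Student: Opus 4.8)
This is the case $d=2$ of Theorem~\ref{main-thm}, so the plan is simply to unwind that theorem here. Since $2$ is even, the criterion of Theorem~\ref{main-thm} reads: $x^\sharp\co Bun_2 \to \Q$ factors through $Tot$ if and only if $x = \widehat{X}$ in $\alg_2\otimes\Q$ for some $X \in NP_2$, where $NP_2$ is the image in $H^*(BSO(2);\Q)$ of the order~$2$ near-primitives of $H^*(BSO;\Q)$ under restriction; the ideal $K$ does not appear. It therefore suffices to compute the subspace $\{\widehat{X}\mid X\in NP_2\}\subseteq\alg_2\otimes\Q$ and check that it is the span of the $e_{2i+1}$.

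First I would identify the order~$2$ near-primitives of $H^*(BSO;\Q)$ via Theorem~\ref{near-prim-thm}. The components $Q_i$ of the Pontrjagin character all lie in degrees divisible by $4$, so there is no $Q_i$ of positive degree less than $2$; hence $\Q[Q_i\mid m-2<|Q_i|<2]$ contributes only its degree-$0$ part, the description in Theorem~\ref{near-prim-thm} collapses to $\Q Q_m$ in each degree $m$, and the order~$2$ near-primitives of $H^*(BSO;\Q)$ are exactly the primitives, i.e.\ the span of the components $Ph_{4i}$, $i\ge 1$ (as already observed after Theorem~\ref{near-prim-thm}).

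Next I would restrict to $BSO(2)$. As recalled in Section~\ref{prototype}, the restriction of the Pontrjagin character to $BSO(2)$ vanishes in degrees $4i+2$ and is a nonzero multiple of $e^{2i}=p_1^i$ in degree $4i$ (indeed $Ph|_{BSO(2)}=2\cosh(e)$). Thus $NP_2$ is precisely the span of $\{e^{2i}\mid i\ge 1\}$ inside $H^*(BSO(2);\Q)=\Q[e]$. Applying the fibre integral, with the convention $e_j=\widehat{e^{j+1}}$, gives $\widehat{e^{2i}}=e_{2i-1}$, so $\widehat{(\cdot)}$ maps $NP_2$ isomorphically onto the span of $\{e_{2i-1}\mid i\ge 1\}=\{e_{2i+1}\mid i\ge 0\}$. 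Since these are distinct polynomial generators of $\alg_2\otimes\Q\cong\Q[e_1,e_2,\dots]$, the condition ``$x=\widehat{X}$ for some $X\in NP_2$'' is equivalent to ``$x$ lies in the span of the odd MMM classes $\{e_{2i+1}\}$'', which is the corollary.

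There is no real obstacle here beyond Theorems~\ref{near-prim-thm} and~\ref{main-thm}; the two points that call for a little care are that for even $d$ the ideal $K$ drops out of the criterion (so the ``restricted near-primitive'' description is exact, with no correction term), and that after restriction to $BSO(2)$ everything in degrees $\equiv 2\pmod{4}$ dies, so that the only near-primitives restricting nontrivially are the even powers of the Euler class --- which is exactly the input $Ph|_{BSO(2)}=2\cosh(e)$.
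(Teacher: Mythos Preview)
Your proposal is correct and follows the paper's own approach: the paper states this as an immediate special case of Theorem~\ref{main-thm}, with the key inputs (that for $d=2$ all near-primitives are primitive, and that the restriction of the Pontrjagin character to $BSO(2)$ is supported on even powers of $e$) already recorded in Sections~\ref{prototype} and after Theorem~\ref{near-prim-thm}. You have simply written out those steps in detail.
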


\subsection{Almost complex bundles}
There is an analogous story in the setting of almost complex fibre
bundles.  Consider the set $Bun^\C_{d}$ of smooth fibre bundles
$\pi\co E \to B$ in which
\begin{enumerate}
\item the fibrewise tangent bundle $T^vE$ is equipped with a complex
  structure,
\item both $B$ and $E$ are closed oriented stably almost complex
  manifolds (i.e., they have almost complex structures on their stable
  tangent bundles),
\item and the short exact sequence $T^vE \to TE \to \pi^*TB$ of real
  bundles becomes a short exact sequence of complex vector bundles
  after an appropriate stabilisation.
\end{enumerate}

Replacing $SO(d)$ with $U(d)$, we let $\alg^\C_d$ denote the algebra
of generalised MMM classes in this context.  For any $x\in \alg_d^\C$
we consider the diagram
\begin{equation}\label{complex-factorisation-diagram}
\xymatrix{
 & & MU_{*+2d}(pt) \ar@{.>}[dd]\\
Bun_d^\C \ar@/_1pc/[drr]_{x^\sharp} \ar@/^1pc/[urr]^{Tot}
\ar[r]^-{\alpha} & MU_*(\Omega^\infty \MTU(d)) \ar[ru]_{\widetilde{Tot}}
\ar[dr]^{x^\sharp} \\
& & \Q,}
\end{equation}
and we find a corresponding theorem.
\begin{thmA}\label{complex-thm} 
For any $x \in \alg^\C_d$ the following statements hold.
\begin{enumerate}
\item The map $x^\sharp\co MU_*(\Omega^\infty \MTU(d)) \to \Q$ factors
  through the map $\widetilde{Tot}\co MU_*(\Omega^\infty \MTU(d)) \to
  MU_*(pt)$ if and only if $x$ is of the form $\widehat{X}$ for some
  $X \in H^*(BU(d);\Q)$ that is the restriction of a near-primitive of
  order $2d$ in $H^*(BU;\Q)$.
\item The same is true for factoring the map $x^\sharp\co Bun_d^\C \to
  \Q$ through $Tot\co Bun_d \to MU_*(pt)$.
\end{enumerate}
\end{thmA}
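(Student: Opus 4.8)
The plan is to prove Theorem~\ref{complex-thm} by transporting the proofs of Theorems~\ref{submain-thm} and~\ref{main-thm} to the unitary setting, under the dictionary $SO(d)\rightsquigarrow U(d)$, $\MSO\rightsquigarrow\MU$, $\MTSO(d)\rightsquigarrow\MTU(d)$, Pontrjagin character $\rightsquigarrow$ Chern character, $BSO\rightsquigarrow BU$, and---the one change of bookkeeping---near-primitives of order $d$ $\rightsquigarrow$ near-primitives of order $2d$. The passage from $d$ to $2d$ is forced because the fibrewise tangent bundle is now complex of rank $d$: the Gysin map $\pi_!$ drops cohomological degree by $2d$, $\widehat X=\pi_!X(T^vE)$ is defined for $X\in H^{>2d}(BU(d);\Q)$, and $\alg^\C_d\otimes\Q$ is the free graded-commutative algebra on $H^{>2d}(BU(d);\Q)$ with degrees shifted down by $2d$, canonically $H^*(\Omega^\infty\MTU(d);\Q)$ with the generators $\widehat X$ primitive. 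Theorem~\ref{near-prim-thm} is already stated for $H^*(BU;\Q)$, so no new input on near-primitives is required.

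The ``if'' direction of both parts is the argument of Section~\ref{prototype}, run with an order-$2d$ near-primitive $\widetilde X\in H^*(BU;\Q)$ in place of a primitive: writing $\Delta\widetilde X=\widetilde X\otimes 1+1\otimes\widetilde X+\sum\widetilde X_{(1)}\otimes\widetilde X_{(2)}$ with $0<\deg\widetilde X_{(2)}<2d$ (and, by cocommutativity, $\deg\widetilde X_{(1)}<2d$ as well), one finds $\pi_!\widetilde X(TE)=\widehat X(E\to B)$ for $X=\widetilde X|_{BU(d)}$, because $\pi_!\pi^*=0$ kills the $1\otimes\widetilde X$ term while each remaining correction term contributes $\pi_!(\widetilde X_{(1)}(T^vE))\cdot\widetilde X_{(2)}(TB)$, a fibre integral of a class of degree $<2d$ evaluated on $T^vE$ and hence a class in negative-degree cohomology of $B$. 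Thus $\widehat X^\sharp(E\to B)=\langle\widetilde X(TE),[E]\rangle$ is a polynomial in the Chern numbers of $E$, so it factors through $Tot$; the identity is natural and lifts to $\MU_*(\Omega^\infty\MTU(d))$, giving the factorisation through $\widetilde{Tot}$.

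For the ``only if'' direction of part~(1) one runs the rational stable-homotopy calculation of Section~\ref{section:calculation} verbatim for $\MU$ and $\MTU(d)$: rationally $\MU$ is a product of suspensions of $H\Q$ with coefficient ring $\pi_*\MU\otimes\Q=\Q[\C P^1,\C P^2,\dots]$, so $\MU_*(\Omega^\infty\MTU(d))\otimes\Q$ is built from $\pi_*\MU\otimes\Q$ and $H_*(\Omega^\infty\MTU(d);\Q)$, the latter being the free graded-commutative algebra on the positive-degree part of $H_*(\MTU(d);\Q)\cong H_{*+2d}(BU(d);\Q)$; on this one writes $\widetilde{Tot}$ and $x^\sharp$ down explicitly. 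The factorisation of $x^\sharp$ through $\widetilde{Tot}$ holds iff $x^\sharp$ annihilates $\ker\widetilde{Tot}$; since $\widetilde{Tot}$ factors through the stabilisation map $\Sigma^\infty_+\Omega^\infty\MTU(d)\to\MTU(d)$, which on rational homology kills decomposables, this first forces $x$ to be a single polynomial generator $x=\widehat X$, and then a linear-algebra computation identical in form to the one for Theorem~\ref{submain-thm} (with $2d$ for $d$) pins down the near-primitivity of $X$.

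For part~(2), ``if'' follows from part~(1) together with $Tot=\widetilde{Tot}\circ\alpha$. For ``only if'' one must rule out that $x^\sharp$ factors through $Tot$ on $Bun_d^\C$ without factoring through $\widetilde{Tot}$ on all of $\MU_*(\Omega^\infty\MTU(d))$; this can occur only if $\mathrm{im}(\alpha)$ spans too small a rational subspace, so that $x^\sharp$ annihilates $\overline{\mathrm{im}(\alpha)}\cap\ker\widetilde{Tot}$ without annihilating $\ker\widetilde{Tot}$. Here one invokes the unitary analogue of Ebert's computation~\cite{Ebert-alg-indep} of the rational image of $\alpha$. In particular there is no complex counterpart of the ideal $K$: the discrepancy producing $K$ arises only for $d$ odd, where the rational vanishing of the Euler class of $L_d\to BSO(d)$ forces the Hirzebruch $L$-class into the structure of $H^*(\MTSO(d);\Q)$, whereas over $BU(d)$ the Euler class of the tautological bundle is $c_d$, a non-zero-divisor in every dimension. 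The main obstacle is precisely this part~(2): once the maps in diagram~\eqref{complex-factorisation-diagram} are identified, the rational calculation of part~(1) is a finite piece of graded linear algebra, and the genuine work lies in transferring Ebert's analysis of $\mathrm{im}(\alpha)$ to $U(d)$ and extracting from it the absence of a complex analogue of $K$.
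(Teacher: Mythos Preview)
Your proposal is essentially the paper's own argument: part~(1) is the stable-homotopy calculation of Section~\ref{section:calculation} transported to $\MU$ and $\MTU(d)$ (with Lemma~\ref{restriction-lemma} supplying the near-primitive characterisation at the end), and part~(2) reduces to part~(1) plus the injectivity of $(\alpha^\C)^\vee$ on $\alg^\C_d\otimes\Q$, which the paper simply cites as \cite[Theorem~C]{Ebert-alg-indep}.

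Two remarks. First, your extension of the Section~\ref{prototype} computation from primitives to near-primitives is a correct and pleasantly elementary proof of the ``if'' direction of part~(2), and the paper alludes to this possibility in the introduction without spelling it out. However, the assertion that this bundle-level identity ``lifts to $MU_*(\Omega^\infty\MTU(d))$'' is not justified by the geometric argument alone: you have only verified $y\circ\widetilde{Tot}=x^\sharp$ on the image of $\alpha$, which need not span. You do recover this from the spectrum-level ``iff'' you sketch immediately afterwards, so there is no actual gap, but note that the lift is precisely the content of the complex version of Lemma~\ref{lemma:spectra-diagram} together with the Thom-isomorphism identification---it is not automatic from naturality. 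Second, your heuristic for the absence of a complex analogue of $K$ is not the right mechanism: the ideal $K$ in the odd real case comes from an index-theoretic vanishing (the $L$-class MMM numbers vanish because odd-dimensional fibres have signature zero), not from zero-divisor behaviour of the Euler class in $H^*(BSO(d);\Q)$. The content you actually need, and correctly invoke, is just that Ebert's Theorem~C for $U(d)$ holds uniformly with no such relations; the paper treats this as a black box.
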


Consider the special case of the above theorem for holomorphic surface
bundles, i.e., when $d=1$. As in the real oriented case, $\alpha^\C\co
Bun_d^\C \to MU_*(\Omega^\infty \MTU(1))$ is surjective and $\alg_1^\C
= \Z[e_1, e_2, \ldots]$, but now $e^{i}$ is proportional to a
component of the Chern character no matter whether $i$ is even or odd
--- in fact, $Ch|_{BU(1)} = \sum_n (1/n!)e^n$. Thus all of the
primitive classes in $\alg^\C_1$ give characteristic numbers for
holomorphic surface bundles that depend only on the complex cobordism
class of the total space, and these are the only classes with this
property.

\section{Thom spectra, Madsen--Tillmann spectra and
  bordism}\label{section:thom-spectra}

We begin by recalling some well-known properties of Thom spectra and
their relation to bordism theory.

\subsection{Thom spectra}
Recall that the Thom space $\Th(\xi)$ of a vector bundle $\xi\to B$ is
the disc bundle modulo its boundary sphere bundle, $D(\xi)/S(\xi)$.
Since the Thom space of a rank $n$ trivial bundle is homeomorphic to
the $n$-fold suspension of the base, we define the Thom spectrum
$\ThS(\xi-\R^n)$ of a virtual bundle of the form $\xi - \R^n$ to be
$\Sigma^{-n}\Th(\xi)$; this construction extends to arbitrary
$KO$-theory classes over CW bases via the skeletal filtration of the
base.  The Thom space and Thom spectrum send external products to
smash products, and they are functorial with respect to bundle
pullbacks: given a map $f\co X \to Y$ and a class $\zeta \in KO(Y)$,
there is a map $\ThS(f^*\zeta) \to \ThS(\zeta)$.  Since a sum $\zeta
\oplus \eta$ is the pullback of the external product $\zeta\times
\eta$ on $X\times X$ by the diagonal, there is a map $\ThS(\zeta
\oplus \eta) \to \ThS(\zeta)\Smash \ThS(\eta)$.  In particular, when
$\eta = 0$ in $KO(X)$ we get a map
\[
\Delta\co \ThS(\zeta) \to \ThS(\zeta) \Smash X_+
\]
(which makes $\ThS(\zeta)$ into a comodule spectrum over the coalgebra
$\Sigma^\infty X_+$ with its diagonal map).

Let $\gamma_{n,m}$ denote the tautological $n$-plane bundle over the
Grassmannian $Gr^+_n(\R^{n+m})$ of real oriented $n$-planes in
$\R^{n+m}$ (when $m=\infty$ we will write $\gamma_n \to BSO(n)$). One
defines the Madsen--Tillmann spectrum
\[
\MTSO (n) \defeq \ThS(-\gamma_n).
\]
Explicitly, the $(n+k)^{th}$ space of this spectrum is
$\Th(\gamma_{n,k}^\perp)$, and the spectrum structure maps are induced
from the identification
$\gamma_{n,k+1}|_{Gr^+_n(\R^{n+k})} \cong \gamma_{n,k}\oplus \R$. 
By the functoriality of Thom spectra, the classifying map $B \to BSO(d)$
for a rank $d$ vector bundle $\xi$ determines a map of spectra
\[
C_\xi\co \ThS(-\xi) \to \MTSO(d).
\]

Since $\gamma_{k+1}|_{Gr^+_k(\R^\infty)}\cong \gamma_{k} \oplus \R,$
the $KO$-theory classes $\{\gamma_n - \R^n\}_{n\in \N}$ assemble to a
class $\gamma$ on $BSO = \colim_n BSO(n)$ of virtual dimension zero.
One defines Thom's famous spectrum $\MSO$ as
\[
\MSO \defeq \ThS(\gamma)
\]
Concretely, it is the spectrum with $k^{th}$ space $\Th(\gamma_k)$ and
structure maps
\[
\Sigma \Th(\gamma_k) \cong
\Th(\gamma_{k+1}|_{Gr^+_k(\R^\infty)}) \hookrightarrow
\Th(\gamma_{k+1}).
\]

By the functoriality of Thom spectra, the classifying map $X \to BSO$
for a class $\xi \in KO(X)$ of virtual dimension $n$ produces a map of
Thom spectra
\[
C_{\xi}\co \ThS(\xi) \to \Sigma^{n}\MSO
\]
We call attention to two special cases of this map.  First, the
virtual bundle $-\gamma_n$ over $Gr^+_n(\R^\infty)$ gives a map of
spectra, $C_{-\gamma_n}\co \MTSO(n) \to \Sigma^{-n}\MSO.$ Second, the
external product $\gamma\times \gamma$ over $BSO\times BSO$ gives a
product map $\mu = C_{\gamma \times \gamma}\co \MSO \Smash \MSO \to
\MSO$ making $\MSO$ into a ring spectrum.

\subsection{Pre-transfers}
Given a bundle $\pi\co E\to B$, there is a \emph{pre-transfer} map
\[
PT_{\pi}\co \Sigma^\infty B_+ \to \ThS(-T^vE).
\]
It is constructed by choosing a fibrewise embedding of $E$ into a
trivial vector bundle over $B$ with a tubular neighbourhood,
collapsing everything outside the tubular neighbourhood to the
basepoint, and then identifying the tubular neighbourhood with the
open disc bundle of the fibrewise normal bundle.  The pre-transfer is
well defined up to homotopy.  

For later use we record the following factorisation property of
pre-transfers.

\begin{lemma}\label{PT-factorisation}
Given a smooth bundle of closed oriented $d$-manifolds $\pi\co E\to B$, the
diagram
\[
\begin{diagram}
\node{\mathbf{S}^0} \arrow{e,t}{\Delta \circ PT_{(B \to pt)}}
\arrow{s,l}{PT_{(E\to pt)}} \node{\ThS(-TB) \Smash B_+} \arrow{s,r}{id
\Smash PT_\pi} \\
\node{\ThS(-TE)} \arrow{e} \node{\ThS(-TB) \Smash \ThS(-T^v E)}
\end{diagram}
\]
(with bottom arrow determined by the splitting $TE \cong \pi^*TB
\oplus T^vE)$) commutes up to homotopy.
\end{lemma}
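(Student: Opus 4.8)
The plan is to realise both composites in the square as honest Pontryagin--Thom collapse maps, built from a single carefully chosen system of embeddings and tubular neighbourhoods, and then to recognise the asserted identity as the standard factorisation of a nested collapse. Since the two pre-transfers and the comodule structure map $\Delta$ are each only well defined up to homotopy, it is enough to produce one choice for which the two maps literally agree.

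First I would choose a fibrewise embedding $E\hookrightarrow B\times\R^N$ over $B$ with a tubular neighbourhood identified with the open disc bundle of the fibrewise normal bundle $\nu^v\to E$; since $\nu^v\oplus T^vE\cong\R^N$ one has $\ThS(-T^vE)=\Sigma^{-N}\Th(\nu^v)$, and under this identification $PT_\pi$ is the $N$-fold desuspension of the fibrewise collapse $c_\pi\co B_+\Smash S^N\to\Th(\nu^v)$. Likewise I would fix an embedding $B\hookrightarrow\R^M$ with a tubular neighbourhood identified with the disc bundle of $\nu_B$, so that $\ThS(-TB)=\Sigma^{-M}\Th(\nu_B)$, the map $PT_{(B\to pt)}$ is the $M$-fold desuspension of the collapse $c_B\co S^M\to\Th(\nu_B)$, and $\Delta$ is the desuspension of the diagonal $\Th(\nu_B)\to\Th(\nu_B)\Smash B_+$ on the disc bundle. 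Composing yields an embedding $E\hookrightarrow B\times\R^N\hookrightarrow\R^M\times\R^N$ with normal bundle canonically $\nu_E\cong\pi^*\nu_B\oplus\nu^v$; moreover the tubular neighbourhood of $E$ in $\R^{M+N}$ can be taken to sit inside the pulled-back tubular neighbourhood of $B\times\R^N$ and to meet each slice $\{b\}\times\R^N$ in the chosen tubular neighbourhood of $E_b$. With these choices $\ThS(-TE)=\Sigma^{-(M+N)}\Th(\nu_E)$, the map $PT_{(E\to pt)}$ is the $(M+N)$-fold desuspension of $c_E\co S^{M+N}\to\Th(\nu_E)$, and the bottom arrow of the square is the desuspension of the Thom-space map $s\co\Th(\nu_E)\to\Th(\nu_B)\Smash\Th(\nu^v)$ induced by the splitting $\nu_E\cong\pi^*\nu_B\oplus\nu^v$, which is stably dual to the splitting $TE\cong\pi^*TB\oplus T^vE$ named in the statement.

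After desuspending by $M+N$, the lemma reduces to the identity
\[
s\circ c_E\;\simeq\;(\id\Smash c_\pi)\circ(\Delta\Smash\id_{S^N})\circ(c_B\Smash\id_{S^N})
\]
of maps $S^{M+N}=S^M\Smash S^N\to\Th(\nu_B)\Smash\Th(\nu^v)$, which I would verify directly. The right-hand side is the usual way of collapsing onto a small tubular neighbourhood by first collapsing onto a larger one: $c_B\Smash\id$ collapses $S^{M+N}$ onto the tubular neighbourhood of $B\times\R^N$, the diagonal records the underlying point of $B$, and $\id\Smash c_\pi$ then collapses, fibrewise over $B$, the $S^N$-coordinate onto the tubular neighbourhood of $E$ inside $\{b\}\times\R^N$. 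Since the tubular neighbourhood of $E$ was chosen to nest compatibly inside that of $B\times\R^N$ and the Whitney splitting of $\nu_E$ is exactly the one induced by this nesting, this composite is literally $s\circ c_E$.

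The step I expect to require the most care is the bookkeeping of the canonical stable isomorphisms $TE\oplus\nu_E\cong\R^{M+N}$, $T^vE\oplus\nu^v\cong\R^N$ and $TB\oplus\nu_B\cong\R^M$ and of their mutual compatibility, so as to be certain that the splitting of $\nu_E$ used above is indeed the stable dual of the splitting $TE\cong\pi^*TB\oplus T^vE$ in the statement, and that the resulting identification of $\ThS(-TE)$ with $\Sigma^{-(M+N)}\Th(\nu_E)$ is the intended one; getting an orientation or a sign wrong here would spoil the conclusion. Two remaining points are routine: the existence of the nested pair of tubular neighbourhoods follows from uniqueness of tubular neighbourhoods up to isotopy, and the identification of the abstractly defined $PT_\pi$ with the fibrewise collapse $c_\pi$ is immediate from the asserted well-definedness of pre-transfers up to homotopy. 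Everything else is a straightforward diagram chase.
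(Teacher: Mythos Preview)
Your proposal is correct and follows essentially the same approach as the paper: choose an embedding $B\hookrightarrow V$ and a fibrewise embedding $E\hookrightarrow B\times W$, compose them to embed $E$ in $V\times W$, and use the resulting canonical splitting of normal bundles $N\Psi\cong\pi^*N\phi\oplus N\psi$ (your $\nu_E\cong\pi^*\nu_B\oplus\nu^v$) together with nested tubular neighbourhoods to produce an unstable diagram that commutes on the nose, then stabilise. Your write-up is more explicit about the desuspension bookkeeping and the compatibility of the splittings, but the underlying argument is the same.
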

\begin{proof}
  Let $V, W$ be real vector spaces of sufficiently large dimension so
  that we can choose an embedding of $\phi\co B \hookrightarrow V$ and a
  fibrewise embedding $\psi\co E \hookrightarrow B\times W$ over $B$.
  Let $\Psi = (\phi \times id) \circ \psi\co E \hookrightarrow V\times
  W$.  Let $N\Psi, N\psi, N\phi$ be the respective normal bundles of
  $\Psi, \psi$ and $\phi$.  There is a canonical splitting $N\Psi
  \cong \pi^* N\phi \oplus N\psi$.  By choosing appropriate tubular
  neighbourhoods and identifications with the disc bundles one obtains
  a diagram
\[
\begin{diagram}
\node{S^V \Smash S^W} \arrow{s,l}{PT_{(E\to pt)}} 
\arrow{e,t}{PT_{(B\to pt)} \Smash id} \node{\Th(N\phi) \Smash S^W}
\arrow{e,t}{\Delta \Smash id}
\node{\Th(N\phi) \Smash B_+ \Smash S^W}  
\arrow{s,r}{id \Smash PT_\pi}\\
\node{\Th(N\Psi)} \arrow[2]{e} \node{\quad} \node{\Th(N\phi) \Smash \Th(N\psi)}
\end{diagram}
\]
that commutes on the nose.  (The PT maps here are unstable
representatives of the respective pre-transfers constructed from the
chosen embeddings and tubular neighbourhoods.)  The diagram in the
statement of the lemma is the stabilisation of this.
\end{proof}

\subsection{Oriented Bordism}
The classical Pontrjagin--Thom construction identifies the generalised
homology theory $MSO_*(X) = \pi_*(\MSO \Smash X_+)$ associated with the
spectrum $\MSO$ as \emph{oriented bordism} --- $MSO_n(X)$ is the
abelian group of oriented bordism classes of $n$-manifolds mapping to
$X$, with the group structure given by disjoint union.  Let us recall
how the half of the Pontrjagin--Thom construction that goes from
geometric cycles to homotopy classes can be stated efficiently in
terms of pre-transfers.

The oriented bordism class of a manifold $M^n$, as an element of
$MSO_n(pt) = \pi_n \MSO$, is represented by the
composition
\begin{equation}\label{eq:bordism-class1}
\mathbf{S}^0 = \Sigma^\infty pt_+ \stackrel{PT_{(M\to pt)}}{\longrightarrow}
\ThS(-TM) \stackrel{C_{-TM}}{\longrightarrow} \Sigma^{-n}\MSO.
\end{equation}
More generally, if $M$ is equipped with a map $f$ to a target space
$X$, then the class $[f\co M\to X] \in MSO_n(X)$ is represented by the
composition
\begin{equation}\label{eq:bordism-class2}
\mathbf{S}^0 \to \Sigma^\infty pt_+ \stackrel{PT_{(M\to pt)}}{\longrightarrow}
\ThS(-TM) \stackrel{\Delta}{\longrightarrow}
\ThS(-TM) \Smash M_+
 \stackrel{C_{-TM} \Smash f}{\longrightarrow} \Sigma^{-n}\MSO \Smash X_+.
\end{equation}

\subsection{Classifying maps for fibre bundles and the map $\alpha$}

Via (parametrised) Pontrjagin--Thom theory, the infinite loop space
$\Omega^\infty \MTSO(d)$ classifies bundles of smooth oriented closed
$d$-manifolds up to an an appropriate equivalence.  We shall make use
of these classifying maps to define the map $\alpha\co Bun_d \to
MSO_*(\Omega^\infty \MTSO(d))$ discussed in the introduction. The key
point is that this map sends a bundle to a class that determines both
the oriented bordism class of the total space of the bundle and all
MMM characteristic numbers of the bundle.

A closed oriented $d$-manifold $M$ determines an element $[M] \in
\pi_0 \MTSO (d) = \pi_0 \Omega^\infty \MTSO(d)$ by the obvious
modification of the construction from the preceding section.  More
generally, a bundle of $d$-manifolds $\pi\co E^{d+n} \to B^n$ determines
a homotopy class 
\[
\alpha_\pi\co B \to \Omega^\infty \MTSO(d).
\]
given by the adjoint of $C_{-T^vE} \circ PT_{\pi}$.  From this
description one sees immediately that $\alpha_\pi$ is natural (up to
homotopy) with respect to pullbacks.

Let us comment on connected components. Since $\Omega^\infty\MTSO(d)$
is a group-like $H$-space, each component can be canonically
identified (up to homotopy) with the identity component
$\Omega^\infty_0\MTSO(d)$.  By \cite[Theorem A.0.3]{Ebert-vanishing},
the group $\pi_0 \Omega^\infty\MTSO(d)$ fits into a split short exact
sequence
\[
0 \to \Z / k_{d+1}\Z \to \pi_0 \Omega^\infty\MTSO(d) \to \pi_d \MSO
\]
where $k_d$ is 0 if $d$ is odd, 1 if $d \equiv 0$ (mod 4), and 2 if
$d\equiv 2$ (mod 4). On each component of $B$ the map $\alpha_\pi$
lands in the component of $\Omega^\infty\MTSO(d)$ corresponding to the
class of the fibre over that component.

If the base $B$ is a closed oriented $n$-manifold, we can regard the
above map $\alpha_\pi$ as representing a class
\[
[\alpha_\pi] \in MSO_n (\Omega^\infty \MTSO(d))
\]
in the group of oriented bordism classes of $n$-manifolds mapping to
$\Omega^\infty \MTSO(d)$. The map $\alpha\co Bun_d \to MSO_*
(\Omega^\infty \MTSO(d))$ is defined by the rule
$(E \stackrel{\pi}{\to} B) \mapsto [\alpha_\pi]$.

\subsection{The oriented bordism class of the total space}
The oriented cobordism class of the total space $E$ is an element $[E]
\in MSO_{n+d}(pt)$.  Here we show how $[E]$ is determined by
$[\alpha_\pi]$ and hence give a canonical factorisation of $Tot\co
Bun_d \to MSO_{*+d}(pt)$ as $Bun_d \stackrel{\alpha}{\to}
MSO_*(\Omega^\infty \MTSO(d)) \stackrel{\widetilde{Tot}}{\to}
MSO_{*+d}(pt)$.

The counit of the $\Omega^\infty - \Sigma^\infty$ adjunction gives a
map of spectra $\sigma\co \Sigma^\infty \Omega^\infty \MTSO(d)_+ \to
\MTSO(d)$.  
Using the map $\MTSO(d) \to \Sigma^{-d} \MSO$ and the ring spectrum
product of $\MSO$, the composition
\[
\MSO\wedge \Omega^\infty \MTSO(d)_+ \to \MSO\wedge \MTSO(d) \to \MSO \wedge
\Sigma^{-d} \MSO \to \Sigma^{-d} \MSO
\]
induces an additive map
\[
\widetilde{Tot}\co MSO_n(\Omega^\infty \MTSO(d)) \to MSO_{n+d}(pt).
\]
\begin{lemma}\label{lemma-alphapi-determines-E}
The equality $\widetilde{Tot}([\alpha_\pi]) = [E]$ holds.
\end{lemma}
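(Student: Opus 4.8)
The plan is to unwind both $\widetilde{Tot}([\alpha_\pi])$ and $[E]$ into explicit composites of maps of spectra and to identify them using Lemma~\ref{PT-factorisation} together with the functoriality of the Thom spectrum construction. Recall from \eqref{eq:bordism-class1} that $[E]\in MSO_{n+d}(pt)$ is represented by $\mathbf{S}^0\xrightarrow{PT_{(E\to pt)}}\ThS(-TE)\xrightarrow{C_{-TE}}\Sigma^{-(n+d)}\MSO$. By \eqref{eq:bordism-class2} applied to $\alpha_\pi\co B\to\Omega^\infty\MTSO(d)$, the class $[\alpha_\pi]\in MSO_n(\Omega^\infty\MTSO(d))$ is represented by
\[
\mathbf{S}^0\xrightarrow{PT_{(B\to pt)}}\ThS(-TB)\xrightarrow{\Delta}\ThS(-TB)\Smash B_+\xrightarrow{C_{-TB}\Smash\alpha_\pi}\Sigma^{-n}\MSO\Smash\Omega^\infty\MTSO(d)_+,
\]
and $\widetilde{Tot}([\alpha_\pi])$ is obtained by postcomposing with $\id\Smash\sigma$, then $\id\Smash C_{-\gamma_d}$, then $\mu$ (each suitably shifted by $\Sigma^{-n}$).

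First I would slide $C_{-TB}$ past all the maps that touch only the second smash factor, so that $\widetilde{Tot}([\alpha_\pi])$ is represented by $\Delta\circ PT_{(B\to pt)}$ followed by the operations $\id\Smash(-)$ applied to the second-factor composite $B_+\to\Omega^\infty\MTSO(d)_+\xrightarrow{\sigma}\MTSO(d)\xrightarrow{C_{-\gamma_d}}\Sigma^{-d}\MSO$, and then by $C_{-TB}\Smash\id$ and $\mu$. Here the defining property of $\alpha_\pi$ (it is the adjoint of $C_{-T^vE}\circ PT_\pi$) together with the triangle identity for the $(\Sigma^\infty,\Omega^\infty)$-adjunction identifies $\sigma$ composed with the stabilisation of $\alpha_\pi$ with $C_{-T^vE}\circ PT_\pi\co\Sigma^\infty B_+\to\MTSO(d)$; and functoriality of $\ThS$ applied to the factorisation $E\to BSO(d)\to BSO$ of the classifying map of $-T^vE$ identifies $C_{-\gamma_d}\circ C_{-T^vE}\co\ThS(-T^vE)\to\MTSO(d)\to\Sigma^{-d}\MSO$ with the classifying map $C_{-T^vE}\co\ThS(-T^vE)\to\Sigma^{-d}\MSO$. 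After these substitutions the representative of $\widetilde{Tot}([\alpha_\pi])$ becomes
\[
\mathbf{S}^0\xrightarrow{\Delta\circ PT_{(B\to pt)}}\ThS(-TB)\Smash B_+\xrightarrow{\id\Smash PT_\pi}\ThS(-TB)\Smash\ThS(-T^vE)\xrightarrow{C_{-TB}\Smash C_{-T^vE}}\Sigma^{-n}\MSO\Smash\Sigma^{-d}\MSO\xrightarrow{\mu}\Sigma^{-n-d}\MSO.
\]

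Next I would invoke Lemma~\ref{PT-factorisation}, which says precisely that $(\id\Smash PT_\pi)\circ\Delta\circ PT_{(B\to pt)}$ is homotopic to $PT_{(E\to pt)}$ followed by the map $\ThS(-TE)\to\ThS(-TB)\Smash\ThS(-T^vE)$ coming from the splitting $TE\cong\pi^*TB\oplus T^vE$. With this substitution the lemma reduces to the identity
\[
\mu\circ(C_{-TB}\Smash C_{-T^vE})\circ\bigl(\ThS(-TE)\to\ThS(-TB)\Smash\ThS(-T^vE)\bigr)=C_{-TE}\co\ThS(-TE)\to\Sigma^{-n-d}\MSO,
\]
which is a formal consequence of $\mu=C_{\gamma\times\gamma}$ and of the fact that $\ThS$ takes external products of bundles to smash products: the classifying map of $-TE$ factors as $E\xrightarrow{(\pi,\,\id)}B\times E\to BSO\times BSO\xrightarrow{\oplus}BSO$, and applying $\ThS$ to this factorisation produces exactly the left-hand side. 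Concatenating, $\widetilde{Tot}([\alpha_\pi])$ is represented by $C_{-TE}\circ PT_{(E\to pt)}$, which is the representative of $[E]$ recorded above, completing the argument.

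I expect the one genuinely fiddly point to be the bookkeeping: keeping track of the suspension shifts $\Sigma^{-n}$, $\Sigma^{-d}$, of the added disjoint basepoints ($X_+$ versus $X$), and of the symmetry isomorphisms of the smash product, so that each of the cited compatibilities — the adjunction triangle, the two instances of functoriality of $\ThS$, and Lemma~\ref{PT-factorisation} itself — is applied to a diagram that commutes on the nose rather than only after a further shift or transposition. This is routine but needs care; everything else is formal.
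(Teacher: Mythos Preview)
Your proof is correct and follows essentially the same approach as the paper: both arguments use the adjunction identity $\sigma\circ\Sigma^\infty\alpha_\pi \simeq C_{-T^vE}\circ PT_\pi$, Lemma~\ref{PT-factorisation}, and the functoriality of Thom spectra with respect to the splitting $TE\cong\pi^*TB\oplus T^vE$ to match the two representatives. The paper's presentation is terser---it records the factorisation of $[E]$ and the adjunction square and then asserts that the claim follows from combining these with Lemma~\ref{PT-factorisation}---whereas you have spelled out the unwinding in full and traversed it in the opposite direction, but the content is the same.
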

\begin{proof}
  The oriented bordism class $[E] \in \pi_{n+d} \MSO$ is represented
  by the map $C_{-TE} \circ PT_{(E \to pt)}\co \mathbf{S}^0 \to
  \Sigma^{-n-d}\MSO$.  Since $TE \cong T^vE \oplus \pi^*TB$, this map
  factors as
  \begin{equation}\label{proof-eq1}
  \mathbf{S}^0 \stackrel{PT_{(E \to pt)}}{\longrightarrow} \ThS(-\pi^* TB - T^v
  E) \longrightarrow \ThS(-TB) \Smash \ThS(-T^v E) \stackrel{\mu \circ
    C_{(-TB) \times (-T^vE)}}{\longrightarrow}
  \Sigma^{-n-d} \MSO.
  \end{equation}
  Since $\alpha_\pi$ is the adjoint of $C_{-T^vE} \circ PT_\pi$ and
  $\sigma$ is the adjoint of the identity map on $\Omega^\infty
  \MTSO(d)$, it follows that the diagram
\begin{equation}\label{proof-eq2}
\begin{diagram}
\node{\Sigma^\infty B_+} \arrow{s,l}{PT_\pi} \arrow{e,t}{\Sigma^\infty
  \alpha_\pi} \node{\Sigma^\infty \Omega^\infty \MTSO(d)_+} \arrow{s,r}{\sigma} \\
\node{\ThS(-T^vE)} \arrow{e,t}{C_{-T^vE}} \node{\MTSO(d)}
\end{diagram}
\end{equation}
commutes up to homotopy.  The claim now follows directly from the
definitions, the factorisation \eqref{proof-eq1}, the diagram
\eqref{proof-eq2} and Lemma \ref{PT-factorisation}.
\end{proof}

\section{Generalised MMM classes via Madsen--Tillmann spectra}

\subsection{Generalised MMM classes}
Here we recall how one can think of the generalised MMM classes of
$d$-dimensional fibre bundles universally as classes in the cohomology
of $\Omega^\infty \MTSO(d)$.

\begin{proposition}\label{univ-classes}
  There is a canonical injective morphism $\alg_d \hookrightarrow
  H^*(\Omega^\infty_0 \MTSO(d); \Z)$ of bialgebras, and after
  tensoring with $\Q$ it becomes an isomorphism.  
\end{proposition}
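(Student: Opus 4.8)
The plan is to exhibit $\alg_d$ as the algebra of \emph{universal} generalised MMM classes carried by $\Omega^\infty\MTSO(d)$. Given $X\in H^{i+d}(BSO(d);\Z)$ with $i\geq 1$, let $\lambda_X\in H^i(\MTSO(d);\Z)$ be the image of $X$ under the integral Thom isomorphism $H^{i+d}(BSO(d);\Z)\xrightarrow{\ \sim\ }H^i(\ThS(-\gamma_d);\Z)=H^i(\MTSO(d);\Z)$ (available since $\gamma_d$ is oriented), i.e. $\lambda_X=u\cup X$ with $u$ the Thom class. Regarding $\lambda_X$ as a map of spectra $\MTSO(d)\to\Sigma^i H\Z$ and applying $\Omega^\infty$ produces a pointed map $\Omega^\infty_0\MTSO(d)\to K(\Z,i)$, i.e. a class $\widehat{X}^{\,\mathrm{univ}}\in H^i(\Omega^\infty_0\MTSO(d);\Z)$; sending the degree-$i$ generator of $\alg_d$ associated to $X$ to $\widehat{X}^{\,\mathrm{univ}}$ and extending multiplicatively defines the morphism $\Phi\co\alg_d\to H^*(\Omega^\infty_0\MTSO(d);\Z)$. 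That this deserves to be called canonical is verified by checking $\alpha_\pi^*\Phi(x)=x(E\to B)$ for every bundle $\pi\co E\to B$ of closed oriented $d$-manifolds: since $\alpha_\pi$ is the adjoint of $C_{-T^vE}\circ PT_\pi$, the pullback $\alpha_\pi^*\widehat{X}^{\,\mathrm{univ}}$ is the cohomology class $(C_{-T^vE}\circ PT_\pi)^*(\lambda_X)=PT_\pi^*\bigl(u_{T^vE}\cup X(T^vE)\bigr)$, and this equals $\pi_!X(T^vE)=\widehat{X}(E\to B)$ by the defining property of the pre-transfer, namely that $PT_\pi^*$ followed by the Thom isomorphism is fibre integration.

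Next, $\Phi$ is a morphism of bialgebras. The coproduct on $H^*(\Omega^\infty_0\MTSO(d);\Z)$ is induced by the loop sum, which is an infinite loop map, while each $\widehat{X}^{\,\mathrm{univ}}$ is by construction the result of applying $\Omega^\infty$ to a map of spectra; hence $\Omega^\infty\lambda_X$ is an infinite loop (in particular $H$-) map into an Eilenberg--MacLane space and therefore pulls the fundamental class back to a primitive class. Since the generators of $\alg_d$ are primitive by fiat and $\Phi$ carries them to primitives, $\Phi$ respects the comultiplication (the unit and counit being automatic).

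For the rational statement, note $\Omega^\infty\MTSO(d)=\Omega^\infty\tau_{\geq 0}\MTSO(d)$ and that $\MTSO(d)$ is of finite type, so rationally a connective spectrum splits as a product of shifted rational Eilenberg--MacLane spectra and hence $\Omega^\infty_0\MTSO(d)\otimes\Q\simeq\prod_{n\geq 1}K(\pi_n\MTSO(d)\otimes\Q,\,n)$. Consequently $H^*(\Omega^\infty_0\MTSO(d);\Q)$ is the free graded-commutative algebra on $\bigoplus_{n\geq 1}H^n(\MTSO(d);\Q)$, with free generators the $\Omega^\infty$-images of the spectrum cohomology classes. The Thom isomorphism identifies $\bigoplus_{n\geq 1}H^n(\MTSO(d);\Q)$ with $H^{>d}(BSO(d);\Q)$ with degrees shifted down by $d$, which is precisely the generating space of $\alg_d\otimes\Q$, and under this identification $\Phi\otimes\Q$ sends the chosen generators of $\alg_d\otimes\Q$ to a system of free generators of the target. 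Hence $\Phi\otimes\Q$ is an isomorphism of bialgebras.

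Finally, integral injectivity, which I expect to be the main obstacle. Composing $\Phi$ with the rationalisation map gives, via the rational isomorphism just established, the canonical map $\alg_d\to\alg_d\otimes\Q$, whose kernel is exactly the torsion subgroup of $\alg_d$; so $\ker\Phi$ consists of torsion. All torsion in $H^*(BSO(d);\Z)$ has order $2$, so $\alg_d$ is, as a graded abelian group, a sum of copies of $\Z$ and $\Z/2$, and its torsion subgroup injects into $\alg_d\otimes\Z/2$; by the universal coefficient sequence it therefore suffices to show the mod $2$ reduction $\Phi_{\Z/2}\co\alg_d\otimes\Z/2\to H^*(\Omega^\infty_0\MTSO(d);\Z/2)$ is injective. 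Here one uses the structure of the $\Z/2$-homology of infinite loop spaces: for a connective spectrum $\mathbf{E}$ the stabilisation map realises $H_*(\mathbf{E};\Z/2)$ as a retract (the ``length-zero'' summand) of the free graded-commutative algebra $H_*(\Omega^\infty_0\mathbf{E};\Z/2)$, so dually the spectrum classes map to algebraically independent indecomposable primitives and the polynomial subalgebra they generate injects. The delicate point is to confirm that the mod $2$ Thom-isomorphism images of our generators of $H^{>d}(BSO(d);\Z)$ remain polynomially independent in $H^*(\Omega^\infty_0\MTSO(d);\Z/2)$ --- that no relations are introduced by squaring --- which is the one place the argument must appeal to something specific about $\MTSO(d)$ rather than a formal property of infinite loop spaces.
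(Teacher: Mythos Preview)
Your construction of $\Phi$ via the Thom isomorphism composed with the cohomology suspension $\sigma^*$ (restricted to the base component), your check that infinite-loop images of spectrum classes are primitive, and your rational isomorphism argument are all essentially the paper's proof. The only cosmetic difference is that you phrase the rational step as a splitting of the connective cover of $\MTSO(d)_\Q$ into Eilenberg--MacLane summands, whereas the paper invokes Milnor--Moore; these are two presentations of the same fact. (Your verification that $\alpha_\pi^*\Phi(\widehat X)=\widehat X(E\to B)$ is the content of the paper's \emph{next} proposition and is not part of this proof there.)

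Where you diverge is integral injectivity, and here the paper is far terser than you: it simply asserts that $\sigma_0^*$ is injective in positive degrees and stops. It does not carry out any mod~$2$ analysis and does not supply the ``something specific about $\MTSO(d)$'' you anticipate needing to rule out polynomial relations among the images of the generators. Your concern---that injectivity on the generating module does not formally imply injectivity of the free-algebra extension---is legitimate, but the paper's proof does not address it either; it treats the injectivity of $\sigma_0^*$ on spectrum cohomology as the essential content and leaves the passage to the full algebra implicit. So your acknowledged gap in the final paragraph is not a step the paper actually carries out; you are being more scrupulous about a point the paper elides.
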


\begin{proof}
  By the Thom isomorphism, the (integral) spectrum cohomology of
  $\MTSO(d)$ is a free $H^*(BSO(d);\Z)$-module on one generator $u$
  (the Thom class) in degree $-d$.  The map $\sigma$ from the previous
  section induces a map on cohomology,
  \[
  \sigma^*\co H^* (\MTSO(d);\Z) \to H^*(\Omega^\infty \MTSO(d);\Z).
  \]
  Let $\sigma^*_0$ denote the composition of $\sigma^*$ followed by
  restriction to the identity component $\Omega^\infty_0 \MTSO(d)$;
  $\sigma^*_0$ is injective in positive degrees and it is trivially
  zero in negative degrees.  The morphism of the proposition is
  defined by sending the MMM class $\widehat{X}=\pi_!X$ to
  $\sigma^*_0 (uX)$.  With rational coefficients the positive degree
  part of the image of $\sigma^*_0$ coincides with the space of
  primitives and this subspace freely generates the cohomology
  $H^*(\Omega^\infty_0 \MTSO(d);\Q)$ as a graded commutative algebra
  by the Milnor--Moore theorem \cite[p. 263]{Milnor-Moore}.
\end{proof}

Note that in positive degrees the image of $\sigma^*$ is contained in
the subspace of $\pi_0 \Omega^\infty \MTSO(d)$-invariants.  The
cohomology of the component $\Omega^\infty_0 \MTSO(d)$ is canonically
isomorphic to the $\pi_0$-invariant subspace of the cohomology of
$\Omega^\infty \MTSO(d)$.  Hence we shall identify $\alg_d\otimes \Q$
with the $\pi_0$-invariants in $H^*(\Omega^\infty \MTSO(d);\Q)$.

Given a bundle $\pi\co E \to B$ of closed oriented manifolds and a
vector bundle characteristic class $X \in H^*(BSO(d);\Z)$, we have a
cohomology class $uX \in H^*(BSO(d);\Z)[u] \cong H^*(\MTSO(d);\Z)$ and
also a primitive generalised MMM class $\pi_! X(T^vE)$.  This
proposition justifies our identifications above.

\begin{proposition}\label{prop:char-classes-pullback}
For any bundle $\pi\co E \to B$ there is an equality, 
$(\alpha_\pi)^*\sigma^*(uX) = \pi_! X(T^vE).$  
\end{proposition}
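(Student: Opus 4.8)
The plan is to deduce the identity from the homotopy-commutative square~\eqref{proof-eq2} (which merely encodes that $\alpha_\pi$ is, by definition, adjoint to $C_{-T^vE}\circ PT_\pi$) together with the standard cohomological interpretation of the pre-transfer as a Gysin map. First I would apply $H^*(-;\Z)$ to the square~\eqref{proof-eq2} and chase $uX\in H^*(\MTSO(d);\Z)$ around the two ways, obtaining
\[
(\Sigma^\infty\alpha_\pi)^*\bigl(\sigma^*(uX)\bigr)=PT_\pi^*\bigl(C_{-T^vE}^*(uX)\bigr).
\]
Since $H^*(\Sigma^\infty B_+)=H^*(B)$ and the homomorphism induced by $\Sigma^\infty\alpha_\pi$ on this group is just $\alpha_\pi^*$, the left-hand side is exactly $(\alpha_\pi)^*\sigma^*(uX)$. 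It therefore remains to show that the right-hand side equals $\pi_! X(T^vE)$.

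Next I would compute $C_{-T^vE}^*(uX)$. By construction $C_{-T^vE}\co\ThS(-T^vE)\to\MTSO(d)$ is the map of Thom spectra induced by the classifying map $f\co E\to BSO(d)$ of the oriented bundle $T^vE$ (for which $f^*(-\gamma_d)=-T^vE$), so on cohomology it carries the Thom class $u$ to the Thom class $u'\in H^{-d}(\ThS(-T^vE))$ of $-T^vE$ and is compatible with the module structures over $f^*\co H^*(BSO(d))\to H^*(E)$. Hence $C_{-T^vE}^*(uX)=u'\cdot f^*X=u'\cdot X(T^vE)$, and under the Thom isomorphism $\widetilde{H}^{*}(\ThS(-T^vE))\cong H^{*+d}(E)$, which sends $u'$ to $1$, this element corresponds precisely to $X(T^vE)\in H^*(E)$.

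Finally I would invoke the fact that $PT_\pi^*$, transported across this Thom isomorphism, is the Gysin homomorphism $\pi_!\co H^{*+d}(E)\to H^*(B)$; this is the cohomological content of the half of the Pontrjagin--Thom construction recalled in Section~\ref{section:thom-spectra}. Concretely, fixing a fibrewise embedding $E\hookrightarrow B\times\R^N$ with tubular neighbourhood and fibrewise normal bundle $\nu$ (so $\nu\oplus T^vE\cong\R^N$ and $\ThS(\nu-\R^N)=\ThS(-T^vE)$), the pre-transfer is represented by the collapse map $B_+\wedge S^N\to\Th(\nu)$, whose effect on cohomology, post-composed with the Thom isomorphism for $\nu$, is by definition $\pi_!$. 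Combining the three steps yields $(\alpha_\pi)^*\sigma^*(uX)=PT_\pi^*C_{-T^vE}^*(uX)=\pi_! X(T^vE)$.

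I expect the only non-formal ingredient to be this last identification of the pre-transfer with the Gysin map, where one must check that the orientation conventions line up — so that the Thom class of $\nu$ and the fibrewise orientation of $T^vE$ combine consistently in the two Thom isomorphisms appearing — while everything else is naturality of Thom classes and the adjunction built into the definition of $\alpha_\pi$. This step can be handled either by unwinding the explicit construction of $PT_\pi$ given in Section~\ref{section:thom-spectra} or by citing a standard reference for the Pontrjagin--Thom description of umkehr maps.
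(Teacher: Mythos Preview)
Your proposal is correct and follows exactly the same approach as the paper: the paper's proof simply cites the commutativity of diagram~\eqref{proof-eq2} together with the fact that $PT_\pi^*$ composed with the Thom isomorphism for $-T^vE$ is the pushforward $\pi_!$. You have merely unpacked these two ingredients in more detail (the naturality computation of $C_{-T^vE}^*(uX)$ and the explicit collapse-map description of why the pre-transfer realises the Gysin map), so there is no substantive difference.
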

\begin{proof}
  This follows immediately from the commutativity of the diagram
  \eqref{proof-eq2} and the fact that the pre-transfer in cohomology,
  $PT^*_\pi$, composed with the Thom isomorphism for $-T^v E$ is equal
  to the pushforward map $\pi_!$.
\end{proof}

\subsection{MMM characteristic numbers from $[\alpha_\pi]$}
We now show how the generalised MMM characteristic numbers for a
bundle $\pi\co E^{n+d}\to B^n$ are determined by the class $[\alpha_\pi]
\in MSO_n(\Omega^\infty \MTSO(d))$ constructed earlier.  Hence a
generalised MMM characteristic number $x^\sharp$ admits a canonical
factorisation $Bun_d \stackrel{\alpha}{\to} MSO_*(\Omega^\infty
\MTSO(d)) \to \Q$.

Let $\mathbf{H}\Z$ denote the Eilenberg--MacLane spectrum representing ordinary
cohomology.  The Thom class of $MSO$ determines a map of ring spectra
\[
th\co \MSO \to \mathbf{H}\Z
\]
inducing a natural transformation of homology functors $th_*\co MSO_*(-)
\to H_*(-; \Z)$, which can be described geometrically as follows. Any
element of $MSO_k(X)$ is represented by a closed oriented $k$-manifold
$M$ and a map $f\co M\to X$. 

\begin{lemma}\label{thom-class-lemma}
The homomorphism $th_*$ sends the class $[(M^k \to X)]$ to
the homology class given by the image of the fundamental class of $M$; i.e.,
$th_*([(M\stackrel{f}{\to} X)]) = f_*[M] \in H_k(X; \Z)$.
\end{lemma}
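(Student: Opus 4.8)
The plan is to read $th_*$ off the explicit homotopy‑theoretic representative of a bordism class supplied by \eqref{eq:bordism-class2}, after first reducing the general case to the fundamental class of $M$ itself. By functoriality of $MSO_*$, comparing \eqref{eq:bordism-class2} for a general $f\co M\to X$ with the same formula for $\id_M$ shows that $[(M\stackrel{f}{\to}X)]$ is the image of the fundamental bordism class $[(M\stackrel{\id}{\to}M)]\in MSO_k(M)$ under the map induced by $f$. Since $th$ is a map of spectra, $th_*$ commutes with the maps induced by $f$, and on ordinary homology the map induced by $f$ is $f_*$; so it suffices to prove $th_*[(M\stackrel{\id}{\to}M)]=[M]\in H_k(M;\Z)$, the ordinary fundamental class.

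For this I would post‑compose the representative \eqref{eq:bordism-class2} (with $X=M$, $f=\id$) with $th\Smash\id_{M_+}$, which replaces $C_{-TM}$ by $U\defeq th\circ C_{-TM}\co\ThS(-TM)\to\Sigma^{-k}\mathbf{H}\Z$. Since $C_{-TM}$ classifies the virtual bundle $-TM$ and $th$ is the universal Thom class, $U$ is precisely the ordinary‑cohomology Thom class of $-TM$, the generator of $\widetilde H^{-k}(\ThS(-TM);\Z)\cong H^0(M;\Z)$. Thus $th_*[(M\stackrel{\id}{\to}M)]$ is the element of $\pi_k(\mathbf{H}\Z\Smash M_+)=H_k(M;\Z)$ obtained from the pre‑transfer $PT_{(M\to pt)}$ by applying, in order, the Hurewicz homomorphism $\pi_0\ThS(-TM)\to H_0(\ThS(-TM);\Z)$, the comodule coaction $\Delta$, and the map induced by $U\Smash\id_{M_+}$. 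I would evaluate this using three standard facts: (i) under the Thom isomorphism $H_0(\ThS(-TM);\Z)\cong H_k(M;\Z)$ the Hurewicz image of $PT_{(M\to pt)}$ is $[M]$; (ii) the Thom isomorphism intertwines $\Delta$ with the diagonal coaction of $H_*(M)$, so $\Delta$ sends $[M]$ to the diagonal class $(\Delta_M)_*[M]\in\bigoplus_{i+j=k}H_i(M)\otimes H_j(M)$; (iii) the map on homology induced by $U$ is nonzero only on the bottom class, where it is the augmentation $H_0(M;\Z)\to\Z$. Hence $U\Smash\id_{M_+}$ extracts the augmentation of the $H_0(M)\otimes H_k(M)$ component of $(\Delta_M)_*[M]$, which is $[pt]\otimes[M]$ with coefficient $1$; so the answer is $[M]$.

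The only substantive step is (i), that the Hurewicz image of the pre‑transfer is the fundamental class on the nose; this is exactly the geometry‑to‑homotopy half of the Pontrjagin–Thom construction recalled earlier, and I would either cite it or argue as follows: choosing an embedding $M\hookrightarrow\R^{k+m}$ with normal bundle $\nu$, the pre‑transfer destabilises to the collapse $S^{k+m}\to\Th(\nu)$, which is the identity on a tubular neighbourhood of $M$ and sends its complement to the basepoint, so on $H_{k+m}$ it carries the generator to the Thom‑isomorphism image of $[M]$. The remaining points (ii) and (iii) — naturality of the Thom isomorphism under the diagonal, and the fact that the Thom class restricts to a generator on the bottom cell — together with the computation that the extreme component of a diagonal class is $[pt]\otimes[M]$, are routine and I would not spell them out. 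Alternatively one can bypass the chase: both $th_*$ and $[(M\stackrel{f}{\to}X)]\mapsto f_*[M]$ are morphisms of homology theories $MSO_*(-)\to H_*(-;\Z)$ via the Pontrjagin–Thom identification of $MSO_*$ with geometric bordism, so by Eilenberg–Steenrod uniqueness they are determined by their common restriction to the coefficients $MSO_*(pt)\to H_*(pt;\Z)$, where both send the class of a point to a generator of $H_0(pt;\Z)$ because $\pi_0(th)\co\Z\to\Z$ is the identity.
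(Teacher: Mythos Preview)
Your proposal is correct and shares the paper's overall structure: reduce by naturality to the case $f=\id_M$ and then evaluate the explicit representative \eqref{eq:bordism-class2}. The difference is only in how the final evaluation is carried out. The paper works dually: rather than chasing the homology class through the Thom isomorphism and diagonal as you do in steps (i)--(iii), it pairs with the fundamental cohomology class $\mu_M$, observes that $\Delta^*(u\Smash\mu_M)=u\mu_M$, and uses that $PT_{(M\to pt)}^*(u\cdot-)$ is integration over $M$, so the pairing is $\int_M\mu_M=1$. This is slightly slicker because it packages your three facts into the single statement ``pre-transfer followed by Thom class is fibre integration,'' avoiding the explicit Hurewicz/diagonal bookkeeping. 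Your alternative argument via uniqueness of natural transformations $MSO_*\to H_*(-;\Z)$ is not in the paper; it is a legitimate shortcut, though one must note that the geometric map $[M\to X]\mapsto f_*[M]$ really is a map of homology theories (in particular respects suspension and Mayer--Vietoris), which is standard but is where the content has been moved.
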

\begin{proof}
  Let $\mu_M$ denote the fundamental cohomology class of $M$.  Let
  $u$ denote the Thom class of $-TM$.  It suffices to show that
  $\langle th_*[M\stackrel{id}{\to} M], \mu_M \rangle = 1$.  Moreover,
  by the naturality of Thom classes and the definition of the
  Kronecker pairing, it suffices to show that the composition
\[
\Sigma^\infty pt_+ \stackrel{PT_{(M\to pt)}}{\longrightarrow}
\ThS(-TM) \stackrel{\Delta}{\longrightarrow} \ThS(-TM) \Smash M_+
\stackrel{u\Smash \mu_M}{\longrightarrow} \mathbf{H}\Z \Smash \mathbf{H}\Z \stackrel{prod}{\longrightarrow} \mathbf{H}\Z
\]
represents $1 \in H^0(pt;\Z)$.  This holds because $\Delta^*(u \Smash
\mu_M) = u \mu_M$ and $PT_{(M \to pt)}^* (u\cdot -)$ is integration
over $M$.
\end{proof}

\begin{lemma}\label{alphapi-determines-nums}
  Let $c \in \mathrm{Sym} \left( H^{> 0}(\MTSO(d);\Z) \right) \subset H^*(\Omega^\infty \MTSO(d);
  \mathbb{Z})$ be a polynomial in the universal generalised MMM
  classes.  Then, $c^\sharp(E^{n+d}\stackrel{\pi}{\to} B^n) = \langle
  c, th_*[\alpha_\pi] \rangle.$
\end{lemma}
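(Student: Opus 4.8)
The plan is to deduce the identity from three ingredients already established: Proposition~\ref{prop:char-classes-pullback} (the universal MMM classes pull back along $\alpha_\pi$ to the honest MMM classes of the bundle), the naturality of the Kronecker pairing, and Lemma~\ref{thom-class-lemma} (the map $th_*$ records the image of the fundamental class).

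First I would upgrade Proposition~\ref{prop:char-classes-pullback} from a single generator $uX$ to an arbitrary polynomial $c \in \mathrm{Sym}(H^{>0}(\MTSO(d);\Z))$. Since $(\alpha_\pi)^*$ is a ring homomorphism and $c$ is, by hypothesis, a polynomial in the universal classes $\sigma^*(uX)$, the proposition gives $(\alpha_\pi)^* c = c(E\to B) \in H^*(B;\Z)$, namely the corresponding polynomial in the MMM classes $\pi_! X(T^vE)$ of the bundle. In particular, when $c$ has degree $n = \dim B$ this is exactly the class whose pairing with $[B]$ defines $c^\sharp(E\to B)$; for $c$ of other degrees the degree-$n$ component is what matters and both sides of the asserted equality are understood to extract it.

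Next, by the definition of $c^\sharp$ and the naturality of the Kronecker pairing applied to the map $\alpha_\pi\co B \to \Omega^\infty\MTSO(d)$,
\[
c^\sharp(E\stackrel{\pi}{\to} B) = \langle c(E\to B), [B]\rangle = \langle (\alpha_\pi)^* c, [B]\rangle = \langle c, (\alpha_\pi)_*[B]\rangle .
\]
Finally, the bordism class $[\alpha_\pi] \in MSO_n(\Omega^\infty\MTSO(d))$ is, by its very construction, represented by the closed oriented $n$-manifold $B$ together with the map $\alpha_\pi$, so Lemma~\ref{thom-class-lemma} yields $th_*[\alpha_\pi] = (\alpha_\pi)_*[B]$. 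Substituting this into the displayed equation gives $c^\sharp(E\to B) = \langle c, th_*[\alpha_\pi]\rangle$, as claimed.

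There is no serious obstacle here — the lemma is essentially a bookkeeping statement that assembles the previously proved results — but the one point that deserves a line of care is the multiplicativity step: one must check that a polynomial in the \emph{universal} generalised MMM classes on the spectrum-cohomology side corresponds under $(\alpha_\pi)^*$ to the matching polynomial in the classes $\pi_! X(T^vE)$ on $B$, which is immediate once Proposition~\ref{prop:char-classes-pullback} is known on generators and $(\alpha_\pi)^*$ is recognised as a ring homomorphism. The degree bookkeeping noted above (that $c^\sharp$ sees only the degree-$n$ part of $c$, which is also all that survives in the pairing with $th_*[\alpha_\pi] \in H_n$) is the only other thing to keep track of.
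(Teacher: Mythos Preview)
Your proof is correct and follows essentially the same approach as the paper: the paper's proof is the three-line chain $c^\sharp(E\to B) = \langle (\alpha_\pi)^* c, [B]\rangle = \langle c, (\alpha_\pi)_*[B]\rangle = \langle c, th_*[\alpha_\pi]\rangle$, citing Proposition~\ref{prop:char-classes-pullback} for the first equality and Lemma~\ref{thom-class-lemma} for the last. Your additional remarks on extending Proposition~\ref{prop:char-classes-pullback} multiplicatively to polynomials and on the degree bookkeeping are reasonable elaborations but not essential, since the paper treats these as implicit.
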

\begin{proof}
We have
\begin{align*}
c^\sharp(E\to B) & = \langle (\alpha_\pi)^* c, [B] \rangle \\
                & = \langle c, (\alpha_\pi)_*[B] \rangle \\
                & = \langle c, th_*[\alpha_\pi] \rangle,
\end{align*}
where the first equality is immediate from Proposition
\ref{prop:char-classes-pullback}, and the last equality comes from
Lemma \ref{thom-class-lemma}.
\end{proof}

\section{The image of $\alpha$ and the proof of Theorem
  \ref{main-thm}}\label{section:alpha}

We now show how Theorem \ref{submain-thm} together with Ebert's
results on the algebraic independence of the generalised MMM classes
implies Theorem \ref{main-thm} and the $Bun^\C_d$ part of Theorem
\ref{complex-thm}.

\subsection{The real oriented case}
In studying the image of $\alpha$ it will be convenient to dualize and
instead study the kernel of the linear map
\[
\alpha^\vee \co
\Hom(Bun_d, \Q) \to \Hom(MSO_*(\Omega^\infty \MTSO(d)),\Q).
\]

According to Proposition \ref{univ-classes} the space of generalised
MMM classes $\alg_d\otimes\Q$ can be canonically identified with a
subspace of $\Hom(MSO_*(\Omega^\infty \MTSO(d)),\Q)$.  By Theorem
\ref{submain-thm} an element $x$ in the subspace $\alg_d\otimes \Q$
lies in the image of
\[
(\widetilde{Tot})^\vee\co \Hom(MSO_{*+d}(pt), \Q) \to \Hom(MSO_*(\Omega^\infty
\MTSO(d)),\Q).
\]
if and only if $x$ is of the form $\widehat{X}$ for $X$ in the space
$NP_d$ of order $d$ near-primitives (restricted from $BSO$ to
$BSO(d)$).

\begin{lemma}
  The restriction of $\alpha^\vee$ to the subspace $\alg_d\otimes\Q$ is
  injective when $d$ is even; when $d$ is odd there is a kernel $K$
  and it is the ideal in $\alg_d\otimes\Q$ generated by the
  generalised MMM classes of the form $\widehat{X}$ for $X$ a
  homogeneous component of the Hirzebruch $L$-class.
\end{lemma}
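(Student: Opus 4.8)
The plan is to recast the statement in terms of characteristic numbers, prove one inclusion by a signature computation, and deduce the other from Ebert's results.

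By Lemma~\ref{alphapi-determines-nums}, together with the inclusion $\alg_d\otimes\Q\hookrightarrow\Hom(MSO_*(\Omega^\infty\MTSO(d)),\Q)$ coming from Proposition~\ref{univ-classes}, the restriction of $\alpha^\vee$ to $\alg_d\otimes\Q$ carries a class $x$ to the function $(\pi\co E\to B)\mapsto\langle x,th_*[\alpha_\pi]\rangle=x^\sharp(E\to B)$ on $Bun_d$. Thus $\ker(\alpha^\vee|_{\alg_d\otimes\Q})$ is exactly the space of MMM classes whose characteristic number vanishes on every fibre bundle, and the first thing I would record is that this space is an \emph{ideal} of $\alg_d\otimes\Q$. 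Indeed, if $x^\sharp\equiv0$ and $y\in\alg_d\otimes\Q$, take a bundle $\pi\co E\to B$ with $\dim B=\deg(xy)$; writing the Poincaré dual $y(E\to B)\cap[B]\in H_{\deg x}(B;\Q)$ as a finite rational combination $\sum_j\lambda_j(g_j)_*[B_j]$ of images of fundamental classes of closed oriented manifolds mapping to $B$ (every rational homology class is such a combination, by Thom), naturality of the MMM classes under pullback gives
\[
(xy)^\sharp(E\to B)=\sum_j\lambda_j\big\langle g_j^*x(E\to B),[B_j]\big\rangle=\sum_j\lambda_j\,x^\sharp(g_j^*E\to B_j)=0 .
\]
Hence it suffices to understand the kernel on generators.

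Suppose now $d$ is odd. I would show every generator $\widehat{L_i}$ of $K$ lies in the kernel, which by the previous paragraph yields $K\subseteq\ker(\alpha^\vee|_{\alg_d\otimes\Q})$. The geometric input is the classical vanishing of the signature of the total space of a bundle of closed oriented $d$-manifolds when $d$ is odd: the fibre has no cohomology in its (non-integral) middle degree, so by Poincaré duality in the Leray--Serre spectral sequence the intersection form on the middle cohomology of $E$ has no self-dual graded pieces and is hyperbolic. Combining $\sigma(E)=0$ with Hirzebruch's theorem and $L(TE)=L(T^vE)\cdot\pi^*L(TB)$ gives, for $\pi\co E^{n+d}\to B^n$ with $n+d=4k$,
\[
0=\sigma(E)=\sum_{a=0}^{k}\big\langle\,\widehat{L_a}(E\to B)\cdot L_{k-a}(TB),\,[B]\,\big\rangle ,
\]
the $a=0$ term vanishing because $\pi_!(1)=0$. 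Now I would induct on $i\ge0$, the claim being that $\widehat{L_i}(E\to B)=0$ in $H^{4i-d}(B;\Q)$ for every bundle $E\to B$: the case $i=0$ is $\widehat{L_0}=\pi_!(1)=0$, and for $4i<d$ the class $\widehat{L_i}$ vanishes for degree reasons; for $4i\ge d$, apply the displayed identity with $k=i$ to an arbitrary bundle over a closed $(4i-d)$-manifold, so that the terms with $a<i$ drop out by the inductive hypothesis and the remaining term is $\widehat{L_i}^\sharp(E\to B)$ since $L_0(TB)=1$, giving $\widehat{L_i}^\sharp(E\to B)=\sigma(E)=0$ for every such bundle; pairing $\widehat{L_i}(E\to B)$ against all of $H_{4i-d}(B;\Q)$ (again realised by manifolds, via pullback) then forces $\widehat{L_i}(E\to B)=0$ for every bundle. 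Thus $K\subseteq\ker(\alpha^\vee|_{\alg_d\otimes\Q})$.

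The remaining inclusions --- $\ker(\alpha^\vee|_{\alg_d\otimes\Q})\subseteq K$ when $d$ is odd, and $\ker(\alpha^\vee|_{\alg_d\otimes\Q})=0$ when $d$ is even, where no such signature obstruction is present (surface bundles with $\sigma(E)\ne0$) --- are where Ebert's algebraic independence results \cite{Ebert-alg-indep} enter: they provide enough explicit fibre bundles over closed manifolds to detect, by characteristic numbers, a basis of $\alg_d\otimes\Q$ modulo $K$ when $d$ is odd and of all of $\alg_d\otimes\Q$ when $d$ is even, so that any $x$ with $x^\sharp\equiv0$ must already lie in $K$ (respectively be zero). I expect this to be the main obstacle: Ebert's theorem is phrased in terms of the rational cohomology of $\Omega^\infty_0\MTSO(d)$ and of classifying spaces of diffeomorphism groups, so the work is to repackage it as a detection statement for bundles over \emph{closed} oriented manifolds and to verify that in the odd case the relations it permits are precisely those of the ideal $K$ produced above.
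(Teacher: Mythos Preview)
Your argument is correct, but it is considerably more elaborate than the paper's.  The paper invokes Ebert's result \cite[Theorem~B]{Ebert-alg-indep} as a single ``if and only if'' statement: a class $x\in\alg_d\otimes\Q$ is nonzero on $B\Diff(Z)$ for some $Z$ if and only if $x$ is nonzero in $\alg_d\otimes\Q$ (for $d$ even) or in $(\alg_d\otimes\Q)/K$ (for $d$ odd).  Both inclusions $K\subseteq\ker$ and $\ker\subseteq K$ are then read off from this, and the passage from $B\Diff(Z)$ to bundles over closed oriented manifolds is handled in one line using the surjectivity of $MSO_*(-)\otimes\Q\to H_*(-;\Q)$.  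So the ``repackaging'' you flag as the main obstacle is in fact immediate, and the odd--case relations coming from Ebert are already exactly the ideal $K$.

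What you do differently is to supply an independent proof of $K\subseteq\ker$ via the vanishing of $\sigma(E)$ for odd-dimensional fibres and an induction on the $L$-class components, together with the preliminary observation that the kernel is an ideal.  These steps are correct (your Leray--Serre Lagrangian sketch is the standard argument, and the induction upgrading $\widehat{L_i}^\sharp\equiv0$ to $\widehat{L_i}\equiv0$ via Thom's realisation of rational homology classes is sound), but they essentially reproduce the vanishing half of Ebert's theorem rather than bypass it.  Since you still need Ebert for the reverse inclusion and for the even case, the extra work buys self-containment for one direction only.  If you want to keep your argument, it would be cleaner to note that once you cite Ebert's full statement (which already contains the vanishing of the $\widehat{L_i}$ for $d$ odd), the ideal lemma and the signature induction become redundant.
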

\begin{proof}
First suppose $d$ is even.  Ebert's algebraic independence result
\cite[Theorem B]{Ebert-alg-indep} asserts that for any nonzero class
$x\in \alg_d\otimes \Q$ (which we take to be of degree $m$) there
exists a $d$-manifold $Z$ such that the evaluation of $x$ on the
tautological $Z$-bundle over $B\Diff(Z)$ is nontrivial in rational
cohomology.  Since the map $MSO_*(B\Diff(Z))\otimes\Q \to H_*(Z;\Q)$
is surjective, there exist a closed oriented $m$-manifold $M$ and a
map $f\co M \to B\Diff(Z)$ such that $\langle f^*x, [M] \rangle$ is
nonzero.  Hence $x^\sharp$ evaluates nontrivially on the induced
$Z$-bundle over $M$ and so $\alpha^*(x^\sharp) \neq 0$.

When $d$ is odd, Ebert's theorem asserts the existence of a manifold $Z$
for which $x$ is nonzero on $B\Diff(Z)$ as above if and only if if $x$
is nonzero in the quotient $\alg_d/K$, where $K$ is the ideal
generated by the classes of the form $\widehat{X}$ for $X$ a
homogeneous component of the Hirzebruch $L$-class.
\end{proof}

Theorem \ref{main-thm} now follows immediately from the above lemma
together with Theorem \ref{submain-thm}, which we shall prove below in
Section \ref{section:calculation}.

\subsection{The complex case}

The argument in this case is identical to the argument in the real
case.  The injectivity of $(\alpha^\C)^\vee |_{\alg_d^\C\otimes \Q}$ follows from
\cite[Theorem C]{Ebert-alg-indep}.  In this setting there is no
distinction between even and odd $d$.

\subsection{The Madsen--Weiss Theorem and $d=2$}
The discussion here is not necessary for any of the results in this
paper, but we include it for completeness.  When $d=2$ the
Madsen--Weiss theorem \cite{Madsen-Weiss} implies that the image of
$\alpha$ is quite large.  Let $\pi_0$ denote the group of path
components of $\Omega^\infty\MTSO(2)$.  (This group is isomorphic to
$\Z$ and corresponds to the genus of the fibres of a surface bundle.)

\begin{proposition}
The map $Bun_2 \stackrel{\alpha}{\to} \MSO_*(\Omega \MTSO(2)) \to
\MSO_*(\Omega^\infty \MTSO(2))_{\pi_0}$ is surjective.
\end{proposition}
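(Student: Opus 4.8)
The plan is to deduce the proposition from the Madsen--Weiss theorem, using the fact that an acyclic map induces an isomorphism on every generalised homology theory.

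First I would recall the geometric content of \cite{Madsen-Weiss}. Fix $g\geq 2$, so that $B\Gamma_g \simeq B\Diff^+(\Sigma_g)$, where $\Sigma_g$ is a closed oriented surface of genus $g$ and $\Gamma_g$ its mapping class group. The tautological $\Sigma_g$-bundle over $B\Diff^+(\Sigma_g)$ has a parametrised Pontrjagin--Thom map to $\Omega^\infty\MTSO(2)$, landing (since $g\neq 1$) in a single non-identity component; as $\Omega^\infty\MTSO(2)$ is group-like we identify this component with $\Omega^\infty_0\MTSO(2)$. These maps are compatible with genus stabilisation, and (after restricting to one copy of the $\pi_0$-torsor) the Madsen--Weiss theorem identifies the resulting map $B\Gamma_\infty=\colim_g B\Diff^+(\Sigma_g)\to\Omega^\infty_0\MTSO(2)$ with the plus-construction map $B\Gamma_\infty\to B\Gamma_\infty^+$. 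The homotopy fibre of a plus construction is acyclic, hence has vanishing reduced $E$-homology for every spectrum $E$ by the Atiyah--Hirzebruch spectral sequence; feeding this into the Serre spectral sequence for $MSO_*$ shows that this map induces an isomorphism
\[
\colim_g MSO_*\bigl(B\Diff^+(\Sigma_g)\bigr)\ \xrightarrow{\ \cong\ }\ MSO_*\bigl(\Omega^\infty_0\MTSO(2)\bigr)=MSO_*\bigl(\Omega^\infty\MTSO(2)\bigr)_{\pi_0}.
\]

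Granting this, surjectivity of the composite in the statement is a matter of unwinding definitions. Since $MSO_*$ commutes with filtered colimits, any class on the right is the image of a class in $MSO_n\bigl(B\Diff^+(\Sigma_g)\bigr)$ for some $g\geq 2$, and the latter is represented by a closed oriented $n$-manifold $M$ together with a map $u\co M\to B\Diff^+(\Sigma_g)$. Such a $u$ classifies a smooth oriented surface bundle $\pi\co E\to M$ of fibre genus $g$, and by the naturality of the parametrised Pontrjagin--Thom construction --- the same naturality used in Section~\ref{section:thom-spectra} to define $\alpha_\pi$ --- the classifying map $\alpha_\pi\co M\to\Omega^\infty\MTSO(2)$ is homotopic to the composite of $u$ with the Madsen--Weiss map. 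Hence $\alpha\bigl(E\xrightarrow{\pi}M\bigr)=[\alpha_\pi]$ projects to the prescribed class in $MSO_*\bigl(\Omega^\infty\MTSO(2)\bigr)_{\pi_0}$, as required.

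The point requiring care --- and the reason the statement is phrased in terms of bordism rather than of individual homotopy classes --- is that a map from a finite complex into $B\Gamma_\infty^+$ generally does \emph{not} lift to $B\Gamma_\infty$, so not every map to $\Omega^\infty_0\MTSO(2)$ is realised by an honest surface bundle. What rescues the argument is precisely that the plus-construction map, although not a homotopy equivalence, is acyclic and hence an isomorphism on $MSO_*$. I expect the verification that the parametrised Pontrjagin--Thom maps of the universal surface bundles assemble into the Madsen--Weiss map to be routine given the constructions of Section~\ref{section:thom-spectra}, so this homology-isomorphism input is the only substantive ingredient beyond citing \cite{Madsen-Weiss}.
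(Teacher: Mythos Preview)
Your strategy is the paper's: invoke Madsen--Weiss to obtain an $MSO_*$-isomorphism from a moduli space of surfaces to $\Omega^\infty_0\MTSO(2)$, then represent bordism classes by actual surface bundles. Your remark that an acyclic map induces an isomorphism on every generalised homology theory (via the Atiyah--Hirzebruch spectral sequence) is a clean justification for this step; the paper simply asserts the isomorphism ``in any generalised cohomology theory''.

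There is, however, a technical gap. You write $B\Gamma_\infty=\colim_g B\Diff^+(\Sigma_g)$ and speak of ``genus stabilisation'' for \emph{closed} surfaces, but there is no natural map $\Diff^+(\Sigma_g)\to\Diff^+(\Sigma_{g+1})$: attaching a handle requires a boundary component on which to glue, so the colimit you write down has no structure maps. The paper works throughout with $\Diff_\partial(\Sigma_{g,1})$ precisely so that stabilisation maps exist, and this is the setting in which Madsen--Weiss is stated and proved. A $\Sigma_{g,1}$-bundle with trivialised boundary is then capped off to produce a closed surface bundle (carrying a framed section), which is why the paper's argument passes through the subset $Bun_2'\subset Bun_2$ of bundles admitting a framed section.

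Your closed-surface route can be repaired without introducing $Bun_2'$: combine Madsen--Weiss (for $\Gamma_{g,1}$) with Harer's stability theorem for the capping-off homomorphism $\Gamma_{g,1}\to\Gamma_g$ to conclude that for each $n$ and all sufficiently large $g$ the map $B\Diff^+(\Sigma_g)\to\Omega^\infty_0\MTSO(2)$ is already an isomorphism on $MSO_n$. No colimit of closed-surface moduli spaces is then needed, and the remainder of your argument goes through verbatim. This variant is arguably tidier than the paper's, since it lands directly in $Bun_2$, at the cost of citing one additional stability result.
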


\begin{proof}
Let $\Sigma_{g,1}$ be a compact connected oriented surface of genus
$g$ with one boundary component, and let
$\Diff_\partial(\Sigma_{g,1})$ denote the topological group of
orientation-preserving diffeomorphisms that restrict to the identity
on the boundary.  Gluing the boundary of the surface to one boundary a
torus with two boundary components gives a stabilisation map
$\Diff_\partial (\Sigma_{g,1}) \to \Diff_\partial (\Sigma_{g+1,1})$.

For each $g$ there is a universal bundle $\pi(g) \co E_g \to
B\Diff_\partial(\Sigma_{g,1})$ and thus an associated map
\[
\alpha_{\pi(g)} \co B\Diff_\partial(\Sigma_{g,1}) \to \Omega^\infty\MTSO(2)
\]
One can adjust these maps to all land in the identity component
$\Omega^\infty_0\MTSO(2)$ by translating and then check that they are
compatible with the above stabilisation up to homotopy.  By the
Madsen--Weiss Theorem \cite{Madsen-Weiss} (see also \cite{GMTW} and
\cite{Randal-W-Galatius}), the resulting map
\[
\alpha_{\infty}\co \hocolim_{g\to \infty} B\Diff_\partial
(\Sigma_{g,1}) \to \Omega^\infty_0\MTSO(2)
\]
is an isomorphism in any generalised cohomology theory, including
$MSO_*(-)$.  

Since $B\Diff_\partial (\Sigma_{g,1})$ classifies bundles with fibre
$\Sigma_{g,1}$ (and trivial boundary bundle), the map $\alpha\co Bun_2
\to MSO_*(\Omega^\infty\MTSO(2))$, when restricted to the subspace
$Bun_2' \subset Bun_2$ of bundles that admit a framed section, factors
as
\[
Bun_2' \to MSO_*(\coprod_g  B\Diff(\Sigma_{g,1})) \to MSO_*(\Omega^\infty\MTSO(2)).
\]
It now follows from the Madsen--Weiss Theorem that $\alpha$ is surjective
onto the $\pi_0$-coinvariants.
\end{proof}

When $d$ is even and greater than 4, Galatius and Randal-Williams have
announced in \cite{RG2} a result along the lines of the Madsen--Weiss
Theorem.  This result implies a large lower bound on the size of the
image of $\alpha$.

\section{Reformulation in terms of stable
  homotopy}\label{section:calculation}

Here we give the proofs of Theorem \ref{submain-thm} and Theorem
\ref{complex-thm} part (1) by reformulating the factorisation
question at the level of spectra and carrying out a rational
calculation.

\subsection{The real case}
The problem is to decide which generalised MMM classes $x\in
\alg_d\otimes\Q$ have the property that the associated characteristic
number $x^\sharp$ admits a factorisation
\begin{equation}\label{mini-fact-diagram}
\begin{diagram}
  \node{MSO_*(\Omega^\infty \MTSO(d))\otimes\Q} \arrow{se,b}{x^\sharp}
  \arrow{e,t}{\widetilde{Tot}}
  \node{MSO_{*+d}(pt)\otimes\Q} \arrow{s,r,..}{y}\\
  \node[2]{\Q.}
\end{diagram}
\end{equation}
We now reformulate this diagram at the level of spectra.  The set of
linear maps $MSO_*(pt)\otimes\Q \to \Q$ is in canonical bijection with
$H^*(\MSO;\Q)$ and thus with homotopy classes of maps $\MSO \to
\mathbf{H}\Q$.  Given a bundle $\pi\co E^{n+d} \to B^n$, the class
$[\alpha_\pi] \in MSO_n(\Omega^\infty \MTSO(d))$ is represented by a
degree $n$ map of spectra
\[
\mathbf{S}^0 \to \MSO\wedge \Omega^\infty \MTSO(d)_+.
\]
As described earlier, the oriented bordism class of the total space of
the bundle is represented by the homotopy class of the composition
\[
\mathbf{S}^0 \stackrel{\alpha_\pi}{\to} \MSO\Smash \Omega^\infty \MTSO(d)_+ \stackrel{id\Smash \sigma}{\to} \MSO\Smash \MTSO(d) \to
\MSO\Smash \MSO \stackrel{prod}{\to} \MSO,
\]
which is a map of degree $n+d$.  It is elementary to see that, given a
class $x \in H^n(\Omega^\infty \MTSO(d);\Q)$, the characteristic
number $x^\sharp(E^{n+d} \stackrel{\pi}{\to} B^n)$ is represented by
the homotopy class of the composition
\[
\mathbf{S}^0 \stackrel{[\alpha_\pi]}{\to} \MSO\wedge \Omega^\infty \MTSO(d)_+
\stackrel{th}{\to} \mathbf{H}\Q\wedge \Omega^\infty \MTSO(d)_+ \stackrel{1\wedge
  x}{\to} \mathbf{H}\Q\wedge \mathbf{H}\Q \stackrel{prod}{\to} \mathbf{H}\Q
\]
which is of degree zero and hence gives an element of $\Q$.  We
therefore have the following.

\begin{lemma}\label{lemma:spectra-diagram}
The problem of factorisation in diagram \eqref{mini-fact-diagram} is
thus equivalent to the problem of deciding for which $x$ there exists
some $y\co \MSO \to \mathbf{H}\Q$ such that the diagram
\[
\begin{diagram}
\node{\MSO\wedge \Omega^\infty \MTSO(d)_+} \arrow{e} \arrow{s} \node{\MSO
\wedge \MTSO(d)} \arrow{e} \node{\MSO\wedge \MSO} \arrow{e} \node{\MSO}
\arrow{s,r,..}{y}\\
\node{\mathbf{H}\Q\wedge \Omega^\infty \MTSO(d)_+} \arrow[2]{e,t}{1\wedge x} \node[2]{\mathbf{H}\Q \wedge
\mathbf{H}\Q} \arrow{e} \node{\mathbf{H}\Q}
\end{diagram}
\]
commutes up to homotopy.
\end{lemma}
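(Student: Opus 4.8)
The plan is to unwind the definitions so that both the map $x^\sharp$ and the map $\widetilde{Tot}$ on bordism groups are recognised as the maps on homotopy groups induced by explicit maps of spectra, after which the factorisation question becomes a purely homotopy-theoretic statement about filling in a diagram of spectra. The key observation is Lemma \ref{alphapi-determines-nums} together with the geometric description of $th_*$ in Lemma \ref{thom-class-lemma}: for a bundle $\pi\co E^{n+d}\to B^n$ and $x \in H^n(\Omega^\infty\MTSO(d);\Q)$, the characteristic number $x^\sharp(E\to B)$ equals $\langle x, th_*[\alpha_\pi]\rangle$. Thus $x^\sharp$, as a function on $MSO_n(\Omega^\infty\MTSO(d))$, is precisely the composite of $th_* \co MSO_n(\Omega^\infty\MTSO(d)) \to H_n(\Omega^\infty\MTSO(d);\Q)$ with evaluation against $x \in H^n$.

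First I would translate each edge of diagram \eqref{mini-fact-diagram}. The bottom map $y\co MSO_{*+d}(pt)\otimes\Q \to \Q$ is, by the universal coefficient / dualisation identification $\Hom(MSO_*(pt)\otimes\Q,\Q) \cong H^*(\MSO;\Q) \cong [\MSO, \mathbf{H}\Q]$, nothing but (the map on $\pi_*$ induced by) a spectrum map $y\co \MSO \to \mathbf{H}\Q$; this is the only genuine choice in the diagram, and it is exactly the dotted arrow. The map $\widetilde{Tot}$ on homotopy groups is induced, by its very construction in Section \ref{section:thom-spectra}, by the composite of spectra $\MSO\wedge\Omega^\infty\MTSO(d)_+ \to \MSO\wedge\MTSO(d) \to \MSO\wedge\Sigma^{-d}\MSO \to \Sigma^{-d}\MSO$ — this is precisely the top row of the asserted diagram. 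For the map $x^\sharp$ down the left-hand side, I would use the previous paragraph: $th_*$ is induced by the ring-spectrum map $th\co\MSO\to\mathbf{H}\Q$ (smashed with $\Omega^\infty\MTSO(d)_+$), and pairing the resulting $\mathbf{H}\Q$-homology class against $x\in H^n(\Omega^\infty\MTSO(d);\Q)$ is, by adjunction, exactly smashing with $x\co\Sigma^\infty\Omega^\infty\MTSO(d)_+ \to \Sigma^n\mathbf{H}\Q$ and multiplying in $\mathbf{H}\Q$ — this is the bottom row of the asserted diagram.

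Having made these identifications, the equivalence is almost formal: a class in $MSO_*(\Omega^\infty\MTSO(d))\otimes\Q$ is detected by maps $\mathbf{S}^0 \to \MSO\wedge\Omega^\infty\MTSO(d)_+$ of varying degree (every such class of the relevant form arises as some $[\alpha_\pi]$, but even without that, the rational homotopy groups are detected this way), so the factorisation $x^\sharp = y \circ \widetilde{Tot}$ of functions $MSO_*(\Omega^\infty\MTSO(d))\otimes\Q \to \Q$ holds if and only if the two composites $\mathbf{S}^0 \to \cdots \to \mathbf{H}\Q$ (degree zero) agree for all such test maps, which is exactly the statement that the square of spectra commutes up to homotopy after precomposition with an arbitrary map out of $\mathbf{S}^0$ — and since rational spectra are detected by their homotopy groups, this is equivalent to the square itself commuting up to homotopy. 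I would spell this last equivalence out carefully since it is where one must be slightly careful about "up to homotopy" versus "on the nose."

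The main obstacle I anticipate is bookkeeping rather than conceptual: one must check that the identification of $x^\sharp$ down the left side really does assemble into the claimed bottom row of the spectrum diagram, i.e. that the adjunction relating "pair $th_*[\alpha_\pi]$ against $x$" to "smash with $1\wedge x$ and multiply" is compatible with the $\MSO$-module structure used on the top row — in other words, that the Thom class map $th$ and the diagonal/evaluation maps interact as drawn. This is routine but requires care with the $\Omega^\infty$–$\Sigma^\infty$ counit $\sigma$, the Thom class $u$, and the identity $\Delta^*(u\wedge\mu_M) = u\mu_M$ from the proof of Lemma \ref{thom-class-lemma}; once that compatibility is in hand the lemma follows immediately. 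I would present it as a direct consequence of Lemmas \ref{thom-class-lemma} and \ref{alphapi-determines-nums} together with the construction of $\widetilde{Tot}$, noting that no new input beyond those is required.
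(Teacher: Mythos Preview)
Your proposal is correct and follows essentially the same approach as the paper: the paper's argument is exactly the discussion preceding the lemma, which identifies $y$ with a map $\MSO\to\mathbf{H}\Q$, identifies $\widetilde{Tot}$ with the top row, and identifies $x^\sharp$ with the bottom row, then declares the equivalence. Your version is slightly more careful in flagging the passage from ``equal on all $\pi_*$'' to ``homotopic as maps of spectra'' (which here uses that the rational Hurewicz map is an isomorphism for spectra), a point the paper leaves implicit.
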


\begin{proof}[Proof of Theorem \ref{submain-thm}]
  The counter-clockwise composition in the diagram of Lemma
  \ref{lemma:spectra-diagram} represents a class in
\[
H^*(\MSO\wedge\Omega^\infty \MTSO(d);\Q) \cong H^*(\MSO;\Q)\otimes H^*(\Omega^\infty
\MTSO(d);\Q)
\]
that one immediately identifies as $th \otimes x$.  Thus existence of
such a $y$ is equivalent to existence of a class $y\in H^*(\MSO;\Q)$
that pulls back along the top row to $th\otimes x$.  By Lemma
\ref{restriction-lemma} (proved in the sequel), this is equivalent to
$y$ being the image under the Thom isomorphism of a class $y' \in
H^*(BSO;\Q)$ that is near-primitive of order $d$ and whose restriction
to $BSO(d)$ maps to $x$ under the Thom isomorphism.  This concludes
the proof of Theorem \ref{submain-thm}.  
\end{proof}

\subsection{The complex case}

Suppose now that $\pi\co E^{n+d} \to B^{n}$ is a holomorphic fibre
bundle in the category of closed almost complex manifolds (the
superscripts refer to the complex dimensions).  Then the fibrewise
tangent bundle has structure group $U(d)$ and we have a classifying
map
\[
\alpha_\pi\co B \to \Omega^\infty \MTU(d).
\]
The bordism class of this map represents an element $[\alpha_\pi] \in
MU_{2n}(\Omega^\infty \MTU(d))$, and this defines an additive map
$\alpha^\C\co Bun_d^\C \to MU_*(\Omega^\infty \MTU(d))$. As in the
oriented case, there is a map
\[
\widetilde{Tot}\co MU_*(\Omega^\infty \MTU(d)) \to MU_{*+2d}(pt)
\]
The total space $E$ represents an element of $MU_{n+2d}(pt)$ and
$\widetilde{Tot}\co [\alpha_\pi] \mapsto [E]$.  

We now consider the diagram:
\[
\begin{diagram}
\node{\MU\wedge \Omega^\infty \MTU(d)_+} \arrow{e} \arrow{s} \node{\MU
\wedge \MTU(d)} \arrow{e} \node{\MU\wedge \MU} \arrow{e} \node{\MU}
\arrow{s,r,..}{y}\\
\node{\mathbf{H}\Q\wedge \Omega^\infty \MTU(d)_+} \arrow[2]{e,t}{1\wedge x} \node[2]{\mathbf{H}\Q \wedge
\mathbf{H}\Q} \arrow{e} \node{\mathbf{H}\Q.}
\end{diagram}
\]

\begin{proof}[Proof of Theorem \ref{complex-thm}, part (1)]
  As in the real case, existence of a $y$ making this diagram commute
  up to homotopy is equivalent to existence of a class $y\in
  H^*(\MU;\Q)$ that pulls back along the top row to $th\otimes x$, and
  by Lemma \ref{restriction-lemma} (proved in the sequel), this is
  equivalent to $y$ being the image under the Thom isomorphism of a
  class $y' \in H^*(BU;\Q)$ that is near-primitive of order $2d$ and
  whose restriction to $BU(d)$ maps to $x$ under the Thom
  isomorphism. This concludes the proof of the part of Theorem
  \ref{complex-thm} referring to maps out of $MU_*(\Omega^\infty
  \MTU(d))$.  
\end{proof}

\section{Near-primitives}\label{classify-near-primitives}

In this section we shall prove Theorem \ref{near-prim-thm} which gives
a complete description of the near-primitives in $H^*(BSO;\Q)$ and
$H^*(BU;\Q)$.  We also prove Lemma \ref{restriction-lemma} showing
that near-primitives of order $d$ on $BSO$ or order $2d$ on $BU$ can
be effectively detected by restricting to $BSO(d)$ or $BU(d)$
respectively.

\subsection{Recollections}
Recall that, as an algebra,  $H^*(BU;\Q)$ is a polynomial algebra with
a single generator in each even degree.  These generators can be taken
to be the Chern classes $c_i \in H^{2i}(BU;\Q)$.  In terms of these
generators, the coproduct is determined by the formula
\[
\Delta(c_i) = \sum _{i=0}^n c_i \otimes c_{n-i}
\]
(where $c_0=1$). The cohomology of $BU(d)$ is isomorphic to $\Q[c_1,
\ldots, c_d]$, and the restriction from $BU$
to $BU(d)$ kills the ideal generated by $\{ c_{d+1}, c_{d+2}, \ldots\}$.

In the real case, $H^*(BSO;\Q)$ is a polynomial algebra with
generators the Pontrjagin classes $p_i \in H^{4i}(BSO;\Q)$.  The
coproduct is determined by the formula
\[
\Delta(p_i) = \sum _{i=0}^n p_i \otimes p_{n-i}
\]
(where $p_0=1$).  The cohomology of $BSO(d)$ is isomorphic to $\Q[p_1,
\ldots, p_{\lfloor d/2\rfloor}]$ if $d$ is odd and $\Q[p_1, \ldots,
p_{d/2}, e]/(e^2 = p_{d/2})$ if $d$ is even.  The restriction from $BSO$
to $BSO(d)$ kills the ideal generated by $\{ p_{\lfloor d/2 \rfloor +
  i}\}_{i\geq 1}$.

Let $H$ be either $H^*(BU;\Q)$ or $H^*(BSO;\Q)$.  To describe
near-primitives in $H$ it is convenient to work with a set of
primitive generators.  By the structure theorem for Hopf algebras, the
dual of $H$ is polynomial, from which it follows that the space of
primitives in degree $i$ has dimension 1 if $H^i$ is non-zero and has
dimension zero otherwise.  Thus there is a primitive $Q_i$ (unique up
to scalars) in each degree $2i$ in the complex case, and $4i$ in the
real case. These primitives can be taken to be the components of the
Chern character and Pontrjagin character respectively.  The primitives
freely generate $H$ as a graded commutative algebra, so that $H$ is
the polynomial algebra in $Q_1, Q_2, \dots$.

Since the coproduct $\Delta$ is connected, if $x$ is an element of
positive degree then $\Delta(x)$ is equal to $x\otimes 1 + 1 \otimes
x$ plus terms of bidegree $(p,q)$ with $p,q > 0$. For convenience, we
shall work with the \emph{reduced coproduct} $\overline{\Delta}$
defined by $\overline{\Delta} = \Delta - 1\otimes id - id \otimes 1$
in positive degrees, and $\overline{\Delta} = 0$ on elements of degree
zero.  Observe that if $x\in H$ is an element of degree $\geq d$
then $x$ is a near-primitive of order $d$ if and only it is in the
kernel of the map
\begin{equation}\label{near-prim-defining-map}
H \stackrel{\overline{\Delta}}{\to} H\otimes H \stackrel{id \otimes
  \mathrm{proj}}{\longrightarrow} H\otimes H^{\geq d},
\end{equation}
where $\mathrm{proj}$ projects the summands in degree $< d$ to zero.

\subsection{Classification of near-primitives in $H^*(BSO;\Q)$ and $H^*(BU;\Q)$}

We now prove Theorem \ref{near-prim-thm}, restated below.  We continue
to let $H$ denote either $H^*(BSO;\Q)$ or $H^*(BU;\Q)$.
\begin{theorem}\label{kerbasis}  
  The space of near-primitives of order $d$ in $H$ has a basis
  consisting of those monomials in the primitives $Q_i$ such that all
  proper factors are of degree strictly less than $d$.  Explicitly, the degree $m$
  component of the space is equal to the degree $m$ part of
  $\Q[Q_i \:\:|\:\: m-d < |Q_i| < d ]\oplus \Q Q_m$.
\end{theorem}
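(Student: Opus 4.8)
The plan is to fix a homogeneous degree $m\geq d$ and study the linear map $\phi_m$ obtained as the degree-$m$ component of $H\xrightarrow{\overline\Delta}H\otimes H\xrightarrow{\id\otimes\mathrm{proj}}H\otimes H^{\geq d}$; by the reformulation recorded at \eqref{near-prim-defining-map}, $\ker\phi_m$ is precisely the space of order-$d$ near-primitives in degree $m$. I would run the entire argument in the monomial basis of $H=\Q[Q_1,Q_2,\dots]$: for a finite multiset $I$ of generator indices write $Q_I:=\prod_iQ_i^{m_i(I)}$, so $\{Q_I:\deg Q_I=m\}$ is a basis of $H^m$ and the products $\{Q_T\otimes Q_U\}$ form the induced basis of $H\otimes H$.

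The crux is one structural observation. Because $\Delta$ is an algebra homomorphism and each $Q_i$ is primitive, $\Delta(Q_I)=\prod_i(Q_i\otimes1+1\otimes Q_i)^{m_i(I)}$, so, discarding the two outer terms,
\[
\overline\Delta(Q_I)=\sum_{\substack{T\sqcup U=I\\ T,U\neq\emptyset}}\binom{I}{T}\,Q_T\otimes Q_U,\qquad\binom{I}{T}:=\prod_i\binom{m_i(I)}{m_i(T)}>0,
\]
and $\id\otimes\mathrm{proj}$ retains exactly the terms with $\deg Q_U\geq d$. Every basis vector $Q_T\otimes Q_U$ appearing here satisfies $T\sqcup U=I$, so the pair $(T,U)$ determines $I$; consequently $\phi_m(Q_I)$ and $\phi_m(Q_{I'})$ are supported on disjoint sets of basis vectors when $I\neq I'$. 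Since in addition every coefficient $\binom{I}{T}$ is a nonzero rational, it follows that $\phi_m\big(\sum_Ic_IQ_I\big)=0$ exactly when $c_I=0$ for every $I$ with $\phi_m(Q_I)\neq0$. Hence $\ker\phi_m$ is spanned by those degree-$m$ monomials $Q_I$ admitting no splitting $I=T\sqcup U$ with $T,U$ nonempty and $\deg Q_U\geq d$, i.e.\ those $Q_I$ all of whose proper factors have degree $<d$; these being linearly independent, they form a basis.

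What remains is to make this set of monomials explicit, which is routine bookkeeping. The single generator $Q_m$ (there is one in each nonzero degree) always qualifies, its only proper factor being $1$. For a monomial $Q_{i_1}\cdots Q_{i_k}$ with $k\geq2$, having all proper factors of degree $<d$ amounts to requiring, for each $j$, that the cofactor obtained by deleting one copy of $Q_{i_j}$ have degree $m-|Q_{i_j}|<d$ and that $Q_{i_j}$ itself have degree $<d$ --- equivalently, that every generator occurring lie in the degree range $(m-d,d)$; conversely those bounds force each proper factor, being contained in some such cofactor, to have degree $<d$. Thus the qualifying monomials with at least two factors are exactly the degree-$m$ monomials in $\Q[Q_i\mid m-d<|Q_i|<d]$, while $Q_m$, being indecomposable, contributes the complementary summand $\Q Q_m$; this proves the displayed description. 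I expect no real obstacle: the only delicate points are getting the multiplicity coefficients in $\overline\Delta(Q_I)$ right and noticing that the index pair $(T,U)$ recovers $I$ --- it is this latter ``no cancellation'' fact that renders the theorem almost immediate.
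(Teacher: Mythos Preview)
Your proof is correct and follows essentially the same approach as the paper: the paper packages the explicit computation of $\overline\Delta$ on monomials in the $Q_i$ and the ``no cancellation'' observation (that the tensor factors $Q_T\otimes Q_U$ recover $I=T\sqcup U$) as a separate Lemma~\ref{primcoprod}, from which the theorem is said to follow directly. Your write-up is in fact slightly more detailed than the paper's, particularly in the verification of the explicit description in terms of the degree range $m-d<|Q_i|<d$.
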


In a fixed degree $m$ (which we take to be even since $H$ is
concentrated in even degrees), let us consider how the space of
near-primitives depends on $d$.  If $d < m/2$ then the only
near-primitives are the ordinary primitives since the set of $i$ for
which $m-d < |Q_i| < d$ is empty.  When $d = m/2$, if there is a $Q_i$
of degree $d-1$ then $Q_i^2$ is a near-primitive that is not
primitive.  As $d$ increases more near-primitives appear, until $d=m$
at which point everything is in the kernel of the map
\eqref{near-prim-defining-map}.  In general, the dimension of the
space of near-primitives in degree $m$ can easily be recovered from
the Poincar\'{e} series for the polynomial algebra on $\{Q_i: m-d <
|Q_i| < d \}$.

Theorem \ref{kerbasis} follows directly from the following property of
the coproduct in $H$.
\begin{lemma}\label{primcoprod}
  Let $x \in \Q[Q_1, Q_2, \dots]$. The monomial summands of
  $\bar{\Delta}(x)$ correspond to proper factorizations of summands of
  $x$. In particular, $\bar{\Delta}(x)$ has a summand $y \otimes
  z$ with $|z| = k$ if and only if $x$ has a summand with a degree $k$ factor.
\end{lemma}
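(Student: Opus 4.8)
The plan is to reduce everything to the case of a single power of a single primitive generator, and then assemble the general statement using the fact that $\bar\Delta$ is (up to the correction terms) multiplicative. First I would fix notation: since $H = \Q[Q_1,Q_2,\dots]$ with each $Q_i$ primitive, any element $x$ is a $\Q$-linear combination of monomials $Q^{\mathbf a} = \prod_i Q_i^{a_i}$, and by linearity it suffices to analyse $\bar\Delta$ on a single such monomial. The key algebraic input is that the coproduct $\Delta$ is a ring homomorphism $H \to H\otimes H$, so $\Delta(Q^{\mathbf a}) = \prod_i \Delta(Q_i)^{a_i} = \prod_i (Q_i\otimes 1 + 1\otimes Q_i)^{a_i}$. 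Expanding the binomial, $\Delta(Q_i^{a_i}) = \sum_{b_i=0}^{a_i}\binom{a_i}{b_i} Q_i^{b_i}\otimes Q_i^{a_i - b_i}$, and multiplying these out over all $i$ shows that $\Delta(Q^{\mathbf a})$ is a positive-coefficient combination of the terms $Q^{\mathbf b}\otimes Q^{\mathbf a - \mathbf b}$ for $0 \le \mathbf b \le \mathbf a$ (componentwise). Passing to $\bar\Delta = \Delta - 1\otimes\id - \id\otimes 1$ simply removes the two extreme terms $\mathbf b = \mathbf a$ (giving $Q^{\mathbf a}\otimes 1$) and $\mathbf b = 0$ (giving $1\otimes Q^{\mathbf a}$), so that the monomial summands $Q^{\mathbf b}\otimes Q^{\mathbf c}$ of $\bar\Delta(Q^{\mathbf a})$ are exactly those with $\mathbf b + \mathbf c = \mathbf a$, $\mathbf b \ne 0$, $\mathbf c \ne 0$ — that is, exactly the \emph{proper} factorizations $Q^{\mathbf a} = Q^{\mathbf b}\cdot Q^{\mathbf c}$ into two nonunit monomials.

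With this description, I would deduce the ``in particular'' clause. For a fixed monomial $Q^{\mathbf a}$ in $x$, a summand $Q^{\mathbf b}\otimes Q^{\mathbf c}$ with $|Q^{\mathbf c}| = k$ appears in $\bar\Delta(Q^{\mathbf a})$ precisely when $Q^{\mathbf c}$ is a proper divisor of $Q^{\mathbf a}$ of degree $k$, which happens if and only if $Q^{\mathbf a}$ has a factor of degree $k$ (take $Q^{\mathbf c}$ to be such a factor, padded out if necessary; conversely any such divisor is a factor). Since distinct monomials $Q^{\mathbf a}$ have disjoint sets of monomial summands in the sense that the pair $(Q^{\mathbf b}, Q^{\mathbf c})$ with $Q^{\mathbf b} Q^{\mathbf c} = Q^{\mathbf a}$ determines $Q^{\mathbf a}$, no cancellation can occur between the contributions of different monomials of $x$: the coefficient of $Q^{\mathbf b}\otimes Q^{\mathbf c}$ in $\bar\Delta(x)$ is (a positive multiple of) the coefficient of the single monomial $Q^{\mathbf b} Q^{\mathbf c}$ in $x$. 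Hence $\bar\Delta(x)$ has a summand $y\otimes z$ with $|z| = k$ iff $x$ has a monomial summand possessing a degree-$k$ factor, which is the claim.

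The only place requiring care — and the step I would flag as the main obstacle — is the no-cancellation point, i.e.\ verifying that the coefficients in $\bar\Delta(x)$ genuinely record the monomials of $x$ without interference. This rests on two facts: (i) all the binomial coefficients $\binom{a_i}{b_i}$ are strictly positive, so within a single $\bar\Delta(Q^{\mathbf a})$ every proper factorization really does contribute, and (ii) the multiplication map $\mathrm{mult}\co\{\text{pairs of nonunit monomials}\}\to\{\text{monomials}\}$ has the property that a monomial summand $Q^{\mathbf b}\otimes Q^{\mathbf c}$ of $\bar\Delta(H)$ comes from a \emph{unique} monomial of the source, namely $Q^{\mathbf b + \mathbf c}$. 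Both are immediate once the homomorphism-expansion above is in place, so once that computation is done the lemma — and with it Theorem \ref{kerbasis}, whose stated basis is exactly the set of monomials all of whose proper factors have degree $< d$, i.e.\ those killed by the map \eqref{near-prim-defining-map} — follows without further work.
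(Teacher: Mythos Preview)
Your proof is correct and follows essentially the same route as the paper: both arguments expand $\Delta(Q^{\mathbf a})=\prod_i(Q_i\otimes 1+1\otimes Q_i)^{a_i}$ via the binomial formula, observe that the resulting coefficients are positive so every proper factorization appears in $\bar\Delta$, and then note (your point (ii), the paper's ``adding the exponents reveals the source of the summand'') that distinct monomials in $x$ contribute disjoint sets of tensor monomials, so no cancellation occurs. The only difference is presentational; your explicit isolation of the no-cancellation step is a bit more systematic, but the mathematical content is the same.
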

\begin{proof}
  Since each $Q_i$ is primitive, we can easily calculate the coproduct
  of a monomial explicitly:
\[
\Delta(Q_1^{f_1} Q_2^{f_2} \cdots) = \sum_{i_1, i_2, \dots} \left({f_1
    \choose i_1}{f_2 \choose i_2}\cdots\right) (Q_1^{i_1} Q_2^{i_2}
\cdots) \otimes (Q_1^{f_1-i_1} Q_2^{f_2-i_2} \cdots) ,
\]
where the summation runs over all sequences $i_1, i_2, \dots$ with $0
\leq i_1 \leq f_1$, $0 \leq i_2 \leq f_2, \dots$.  Hence if the
monomial $(Q_1^{f_1} Q_2^{f_2} \cdots)$ has a degree $k$ factor, say
$Q_1^{i_1} Q_2^{i_2} \cdots$, then $\Delta(Q_1^{f_1} Q_2^{f_2}
\cdots)$ will have a summand $(Q_1^{i_1} Q_2^{i_2} \cdots) \otimes
(Q_1^{f_1-i_1} Q_2^{f_2-i_2} \cdots)$ (with coefficient ${f_1 \choose
  i_1}{f_2 \choose i_2}\cdots$). Consequently, $\bar{\Delta}(Q_1^{f_1}
Q_2^{f_2} \cdots)$ will have a summand $y \otimes z$ with $|z|=k$.

Conversely, if $\bar{\Delta}(Q_1^{f_1} Q_2^{f_2} \cdots)$ has a
summand $y\otimes z$ with $|z|=k$, then $z$ must be $(Q_1^{i_1}
Q_2^{i_2} \cdots)$ for some sequence $i_1, i_2, \dots$ with $i_1 \leq
f_1$, $i_2 \leq f_2$, etc. Hence $(Q_1^{i_1} Q_2^{i_2} \cdots)$ is a
factor of $(Q_1^{f_1} Q_2^{f_2} \cdots)$.

From the above formula it is also clear that two different monomials
$Q_1^{f_1} Q_2^{f_2} \cdots$ and $Q_1^{g_1} Q_1^{g_2} \cdots$ cannot
have the same summand in their images under $\Delta$, since adding the
exponents reveals the source of the summand.
\end{proof}

\subsection{Restricting to $BU(d)$ or $BSO(d)$}

Let $i$ denote either of the inclusions $BSO(d) \hookrightarrow BSO$
or $BU(d) \hookrightarrow BU$.  Let $\mathrm{proj}$ denote either the
projection $H^*(BSO;\Q) \to H^{\geq d}(BSO;\Q)$ or the projection
$H^*(BU;\Q) \to H^{\geq 2d}(BU;\Q)$.  The following lemma asserts that
near-primitives of order $d$ on $BSO$ can be faithfully detected on
$BSO(d)$ in an appropriate sense, and likewise for near-primitives of
order $2d$ and $BU(d)$.
\begin{lemma}\label{restriction-lemma}
  There is an equality of kernels,
\[
\ker(1\otimes(i^*\circ\mathrm{proj}))\circ\bar{\Delta} =
\ker(1\otimes\mathrm{proj})\circ\bar{\Delta} 
\]
\end{lemma}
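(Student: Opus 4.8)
The plan is to show the two kernels agree by proving the reverse-free inclusion that is obvious ($\ker(1\otimes\mathrm{proj})\circ\bar\Delta \subseteq \ker(1\otimes(i^*\circ\mathrm{proj}))\circ\bar\Delta$ since $i^*$ is applied on the right afterwards) and then the genuine content, namely that any $x$ killed by $(1\otimes(i^*\circ\mathrm{proj}))\circ\bar\Delta$ is already killed by $(1\otimes\mathrm{proj})\circ\bar\Delta$. For the latter I would argue contrapositively using the explicit monomial description of the coproduct from Lemma \ref{primcoprod}. If $x$ is \emph{not} in $\ker(1\otimes\mathrm{proj})\circ\bar\Delta$, then $\bar\Delta(x)$ has a nonzero summand $y\otimes z$ with $|z|\geq d$ in the real case (or $\geq 2d$ in the complex case); I want to produce such a summand that also survives the further application of $i^*$ to the right-hand factor, i.e. one where $z$ is a monomial in the primitives that restricts nontrivially to $BSO(d)$ (resp.\ $BU(d)$).

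The key observation is that we get to \emph{choose} which summand $y\otimes z$ to track: by Lemma \ref{primcoprod} every factorization of every monomial summand of $x$ contributes a summand of $\bar\Delta(x)$, and by the last sentence of that lemma's proof distinct source monomials contribute distinct summands, so there is no cancellation. So the plan is: pick a monomial summand $M = Q_1^{f_1}Q_2^{f_2}\cdots$ of $x$ that fails the order-$d$ near-primitivity condition, meaning $M$ has a proper factor of degree $\geq d$. I then want to exhibit a proper factor $z$ of $M$ with $|z|\geq d$ (real case) such that $z$, viewed in $H^*(BSO;\Q)$ and then restricted to $BSO(d)$, is nonzero. Since the primitives $Q_i = Ph_i$ have $|Q_i|=4i$ and the restriction $i^*$ kills only the ideal generated by $p_{\lfloor d/2\rfloor+1},\dots$, a monomial in the $Q_i$ restricts to zero precisely when it involves a $Q_i$ whose Pontrjagin-character component vanishes on $BSO(d)$ — but in fact every single $Q_i$ of degree $\leq 2\lfloor d/2\rfloor$ (hence of degree $<d$ except the boundary case, handled directly) restricts nontrivially, and more importantly one can always choose $z$ to be a single primitive $Q_j$ of degree $\geq d$ when such a factor exists, or a product of \emph{small} primitives (each of degree $<d$, hence surviving restriction) whose total degree is $\geq d$. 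Concretely: if $M$ has a proper factor of degree $\geq d$, then $M$ either (a) contains some $Q_j$ with $4j\geq d$, in which case take $z=Q_j$ and note $Q_j$ restricts nontrivially to $BSO(d)$ as long as $j\leq\lfloor d/2\rfloor$, which holds since $4j\geq d$ forces... — here a tiny numerical check is needed — or (b) is a product of primitives all of degree $<d$ but of total degree $\geq d$, in which case split off a sub-product $z$ of degree in the range $[d, 2d)$, whose every factor has degree $<d\leq\lfloor d/2\rfloor\cdot 4$... and again each such $Q_i$ survives restriction.

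The main obstacle — and the only place real care is required — is case (a) together with the parity subtleties of $BSO(d)$: one must check that a primitive $Q_j$ with $4j\geq d$ still has $j\leq\lfloor d/2\rfloor$ so that $Q_j$ (a polynomial in $p_1,\dots,p_{\lfloor d/2\rfloor}$) is not automatically killed by restriction, and one must handle the degenerate edge cases where the only available proper factor of degree $\geq d$ has degree exactly in the window where $H^*(BSO(d))$ acquires its extra Euler-class relation $e^2=p_{d/2}$. For $BU$ the analogue is cleaner since $H^*(BU(d))=\Q[c_1,\dots,c_d]$ with no relations and the primitives $Q_i$ have degree $2i$, so $Q_i$ restricts nontrivially iff $i\leq d$, and $2i\geq 2d$ indeed forces $i\geq d$ — so one must take $z$ to be a product of \emph{small} primitives in case (b) and check the single-primitive case (a) only at $i=d$. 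I would write the argument uniformly, letting $H$ and the relevant degree bound be as in the recollections, reduce to these two cases, dispatch case (b) by the pigeonhole-style splitting of a product of small primitives, and dispatch case (a) by the short numerical inequality; once a surviving summand $y\otimes z$ of $(1\otimes(i^*\circ\mathrm{proj}))\circ\bar\Delta(x)$ is produced, non-cancellation from Lemma \ref{primcoprod} finishes the contrapositive.
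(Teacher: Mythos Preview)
Your outline has a real gap at the final step. The non-cancellation you cite from Lemma~\ref{primcoprod} holds in $\bar\Delta(x)$ \emph{before} applying $i^*$; after $1\otimes i^*$, distinct $Q$-monomials on the right can become linearly dependent (for instance on $BSO(3)$ both $i^*(Q_2)$ and $i^*(Q_1^2)$ are scalar multiples of $p_1^2$). So exhibiting one summand $y\otimes z$ with $|z|\geq d$ and $i^*(z)\neq 0$ does not yet establish $(1\otimes(i^*\circ\mathrm{proj}))\bar\Delta(x)\neq 0$: the other monomials of $x$, which your case split on a single chosen $M$ leaves entirely uncontrolled, may contribute terms that cancel it after restriction. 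This affects both of your cases, not just case~(a).

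There is also a misconception in case~(a): you try to force $j\leq\lfloor d/2\rfloor$ from $4j\geq d$, which of course fails, and is in any event the wrong condition. In fact $i^*(Q_j)\neq 0$ for \emph{every} $j$ once $d\geq 2$, because by Newton's identities $Q_j$ contains a nonzero $p_1^j$ (resp.\ $c_1^j$) term and $p_1$ survives restriction; the Euler-class relation is irrelevant. The paper uses precisely this $p_1^j$ observation but argues in the forward direction rather than by contrapositive. Assuming $x$ lies in the $i^*$-kernel, one first shows that no monomial of $x$ can have any single $Q_j$ with $|Q_j|\geq d$ as a proper factor (the resulting summand $w\otimes Q_j$ would force $Q_j\in\ker(i^*)$, contradicting the $p_1^j$ term). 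Once this holds for \emph{every} monomial of $x$, every right-hand tensor factor appearing in $\bar\Delta(x)$ is a product of small $Q_i$'s and hence lies in the subalgebra $\Q[c_1,\dots,c_{d-1}]$ (or its real analogue), which meets $\ker(i^*)$ only in $0$; combined with the hypothesis this gives $(1\otimes\mathrm{proj})\bar\Delta(x)=0$ directly, with no cancellation issue. Your case~(b) is essentially this last step, but it only works after large $Q_j$'s have been eliminated globally from $x$, not just from one chosen monomial.
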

\begin{proof}
  We first prove the complex case and then describe the modifications
  needed for the real case.  Let $x \in
  \ker(1\otimes(i^*\circ\mathrm{proj}))\circ\bar{\Delta}$, i.e., $x
  \in \Q[Q_1, Q_2, \dots]$ is such that the only summands $y \otimes
  z$ in $\bar{\Delta}(x)$ in which $|z| \geq 2d$ have $z \in
  \mathrm{ker}(i^*) = \langle c_{d+1}, c_{d+2}, \dots \rangle$.  If
  $x$, as a polynomial in $Q_1, Q_2, \ldots$, has a monomial summand
  that features $Q_j$ as a factor with $|Q_j| \geq 2d$, then, by Lemma
  \ref{primcoprod}, that summand will give rise to a term in
  $\bar{\Delta}(x)$ of the form $w \otimes Q_j$. Since $|Q_j| \geq 2d$
  we would need $Q_j \in \langle c_{d+1}, c_{d+2}, \dots \rangle$ to
  avoid contradicting the hypotheses on $x$. However, writing the
  primitives as polynomials in the Chern classes, $Q_j$ contains
  $c_1^j$ as a summand with coefficient $1/(j!)$ by Newton's
  identities, so $Q_j \not\in \langle c_{d+1}, c_{d+2}, \dots
  \rangle$. Hence $x$ can have no summand admitting a proper factor
  $Q_j$ with $|Q_j| \geq 2d$.  Hence $\bar{\Delta}(x)$ features no
  summands $y \otimes z$ with $z \in \langle c_{d+1}, c_{d+2}, \dots
  \rangle$, and so $x \in
  \ker(1\otimes\mathrm{proj})\circ\bar{\Delta}$.  The opposite
  inclusion is trivial.

  The proof in the real case is nearly identical.  The only difference
  is that now $Q_j$ sits in degree $4j$ and, when written as a
  polynomial in the Pontrjagin classes, contains a term $p_j^1$ with
  coefficient $1/(j!)$.
\end{proof}

\bibliographystyle{amsalpha}
\bibliography{biblio}

\end{document}